\theoremstyle{plain}
\newtheorem{theorem}{Theorem}[section]
\newtheorem{proposition}[theorem]{Proposition}
\newtheorem{corollary}[theorem]{Corollary}
\newtheorem{remark}[theorem]{Remark}
\newtheorem{definition}[theorem]{Definition}
\newtheorem{main theorem}[theorem]{Main Theorem}
\newlength\savewidth
\newcommand{\svert}{\,|\,}
\newcommand{\llangle}{\langle\!\langle}
\newcommand{\rrangle}{\rangle\!\rangle}
\newcommand{\im}{{\rm im}}
\begin{document}
\title[A non-Hopfian relatively hyperbolic group with respect to a Hopfian subgroup]
{A non-Hopfian relatively hyperbolic group with respect to a Hopfian subgroup}

\author{Jan Kim}
\address{Department of Mathematics\\
Pusan National University \\
San-30 Jangjeon-Dong, Geumjung-Gu, Pusan, 609-735, Korea}
\email{jankim@pusan.ac.kr}

\author{Donghi Lee}
\address{Department of Mathematics\\
Pusan National University \\
San-30 Jangjeon-Dong, Geumjung-Gu, Pusan, 609-735, Korea}
\email{donghi@pusan.ac.kr}

\subjclass[2020]{{Primary 20F65, 20F06}\\
\indent {}}


\begin{abstract}
We produce an example demonstrating
that every finitely generated relatively hyperbolic group
with respect to a collection of Hopfian subgroups need not be Hopfian.
This answers a question of Osin \cite[Problem 5.5]{Osin} in the negative.
\end{abstract}

\maketitle

\section{Introduction}
Recall that a group $G$ is {\em Hopfian}
if every epimorphism $G \rightarrow G$ is an automorphism.
Recall also that a group $G$ is {\em residually finite}
if for every $g \in G \setminus \{1\}$,
there is some finite group $P$ and an epimorphism $\psi : G \rightarrow P$ so that $\psi(g)\neq 1$.
Inspirited by well-known questions about ordinary hyperbolic groups,
Osin~\cite[Problems 5.5 and 5.6]{Osin} asked the following questions.

\begin{itemize}
\item If a finitely generated group $G$ is hyperbolic relative to a collection of Hopfian subgroups
$\{H_1, \dots, H_m\}$,
does it follow that $G$ is Hopfian?

\item If a group $G$ is hyperbolic relative to a collection of residually finite subgroups
$\{H_\lambda\}_{\lambda \in \Lambda}$,
does it follow that $G$ is residually finite?
\end{itemize}

Later, Osin~\cite{Osin4} proved that the second question is equivalent to
Gromov's famous open question of whether every hyperbolic group
is residually finite.
The Hopf property and the residual finiteness property have a close connection.
In particular, Mal'cev~\cite{Malcev} proved that
every finitely generated residually finite group is Hopfian.
Mal'cev's result provides a useful tool to prove that a certain finitely generated group is non-residually finite.
The Hopf properties of torsion-free hyperbolic groups,
toral relatively hyperbolic groups,
hyperbolic groups with torsion,
lacunary hyperbolic groups and finitely presented $C'(1/6)$ or $C'(1/4)$-$T(4)$ small cancellation groups
were verified by many authors (see \cite{Coulon-Guirardel, Groves, Rein-Weid,Sela,Strebel}).
In contrast, Wise~\cite{Wise2} constructed a non-Hopfian $CAT(0)$-group.

On the other hand, there is another property related to the Hopf property.
A group $G$ is called {\em equationally noetherian} if for every system of equations in $G$,
there exists a finite subsystem that has the same set of solutions.
It is well-known that every finitely generated equationally noetherian group is Hopfian.
Reinfeldt and Weidmann~\cite{Rein-Weid} proved that every hyperbolic group is equationally noetherian.
Also, for relatively hyperbolic groups, Groves and Hull~\cite{Groves_Hull} proved that if a group $G$ is hyperbolic relative to a collection of equationally noetherian subgroups, then $G$ is itself equationally noetherian.
However, it has been unknown up to the present whether every finitely generated group
that is hyperbolic relative to a collection of Hopfian subgroups is Hopfian.

The main result of this paper is the following.
This solves Osin's first question mentioned above in the negative.

\begin{theorem}
\label{thm:main_theorem}
Let $\mathbf{H}_0$ be the group given by the presentation
\begin{equation}
\label{equ:H_0_presentation}
\mathbf{H}_0=\langle b, c \svert b^2=c^9=1, \ b^{-1}cb=c^{-1} \rangle,	
\end{equation}
and take successively two HNN-extensions from $\mathbf{H}_0$ as follows:
\begin{subequations}
\begin{align}
\label{equ:H_1_presentation}
\mathbf{H}_1&=\langle \mathbf{H}_0, s \svert s^{-1}bs=bc^{-3}, \ s^{-1}cs=c \rangle;\\
\label{equ:H_2_presentation}
\mathbf{H}_2&=\langle \mathbf{H}_1, t \svert t^{-1}st=s^3 \rangle.
\end{align}
\end{subequations}
Next, form the free product $\mathbf{H}=\mathbf{H}_2 \ast \langle e, f \svert \emptyset \rangle$.
Finally, letting $\langle a \rangle$ be an infinite cyclic group,
take successively two multiple HNN-extensions from $\mathbf{H} \ast \langle a \rangle$ as follows:
\begin{subequations}
\begin{align}
\label{equ:K_presentation}
\mathbf{K}&=\langle \mathbf{H} \ast \langle a \rangle, u, v \svert u^{-1}(bacb^{-1})u=a, \  v^{-1}av=tst^{-1} \rangle;\\
\label{equ:G_presentation}
\mathbf{G}&=\langle \mathbf{K}, x, y \svert x^{-1}ux=c^3ec^3e^{-1}, \ y^{-1}vy=c^3fc^3f^{-1} \rangle.	
\end{align}
\end{subequations}
Then $\mathbf{G}$ is a non-Hopfian group which is hyperbolic relative to the Hopfian subgroup $\mathbf{H}$.
In more detail, the following hold.
\begin{enumerate}[\rm (i)]
\item
$\mathbf{K}$ is hyperbolic relative to the subgroup $\mathbf{H}$.	
	
\item
$\mathbf{G}$ is hyperbolic relative to the subgroup $\mathbf{H}$.
	
\item
$\mathbf{G}$ is a non-Hopfian group.
	
\item
$\mathbf{H}_2$ is a Hopfian group, and thus $\mathbf{H}$ is a Hopfian group.
\end{enumerate}
\end{theorem}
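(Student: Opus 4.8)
For \textbf{(i)} and \textbf{(ii)} I would present $\mathbf{G}$ (and the intermediate $\mathbf{K}$) as a finite relative presentation over $\mathbf{H}$. The free product $\mathbf{H}\ast\langle a\rangle$ is hyperbolic relative to $\{\mathbf{H},\langle a\rangle\}$, and since $\langle a\rangle\cong\ZZ$ is itself hyperbolic it is hyperbolic relative to $\{\mathbf{H}\}$ alone. The letters $u,v,x,y$ are then stable letters of successive HNN extensions whose associated subgroups are the infinite cyclic groups $\langle a\rangle$, $\langle bacb^{-1}\rangle$, $\langle tst^{-1}\rangle$, $\langle c^3ec^3e^{-1}\rangle$, and so on. I would invoke a combination theorem for relatively hyperbolic groups (equivalently, verify a linear relative isoperimetric inequality for the relative presentation with generators $\{a,u,v,x,y\}$ over $\mathbf{H}$). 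The two points needing care are that each associated subgroup be relatively quasiconvex with an acylindrical action on the Bass--Serre tree, and the identification of the resulting maximal parabolic subgroups. The delicate feature is that several of the defining relations, such as \eqref{equ:K_presentation}, glue a loxodromic cyclic group (e.g.\ $\langle a\rangle$) to a parabolic one sitting inside $\mathbf{H}$ (e.g.\ $\langle tst^{-1}\rangle$ or $\langle c^3ec^3e^{-1}\rangle$); I would check that every auxiliary peripheral subgroup produced by the combination theorem is either conjugate into $\mathbf{H}$ or is two--ended, and then remove the two--ended peripheral subgroups—legitimate because $\ZZ$ is hyperbolic—to conclude that $\mathbf{H}$ is the unique peripheral subgroup.

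For \textbf{(iii)} I would produce an explicit surjective endomorphism $\phi\colon\mathbf{G}\to\mathbf{G}$ that is not injective, modelled on the classical Baumslag--Solitar endomorphism of $\langle a,t\svert ta^2t^{-1}=a^3\rangle$. Concretely I would define $\phi$ on the generators by a rule that fixes $\mathbf{H}$ and rescales the identifications encoded in \eqref{equ:K_presentation}, verify that every defining relation is preserved, and then check surjectivity by writing each generator as a $\phi$--image (the rescaled relations allow one to recover $a,u,v$ from their images). Non--injectivity would be exhibited by a single word $w$, a commutator of two conjugates of $a$ that become powers of a common element after applying $\phi$, exactly as $[tat^{-1},a]\mapsto[a^3,a^2]=1$ in the Baumslag--Solitar case. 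The substantive step is to certify that this $w$ is nontrivial in $\mathbf{G}$; I would do this by iterated application of Britton's lemma through the tower of HNN extensions and free products defining $\mathbf{G}$, reading off that $w$ admits no admissible reduction.

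For \textbf{(iv)} I would first observe that the endomorphism $b\mapsto bc^{-3},\ c\mapsto c$ of the finite group $\mathbf{H}_0$ is an automorphism of order $3$, so that $s^3$ is central in $\mathbf{H}_1$ and $\langle s^3\rangle\cong\ZZ$ has finite index; hence $\mathbf{H}_1$ is virtually cyclic, in particular polycyclic--by--finite and residually finite. Then $\mathbf{H}_2$ is an HNN extension of $\mathbf{H}_1$ whose associated subgroups $\langle s\rangle$ and $\langle s^3\rangle$ are infinite cyclic of finite index, and I would establish that $\mathbf{H}_2$ is residually finite by verifying the usual separability and compatibility criterion for such HNN extensions. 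Mal'cev's theorem \cite{Malcev} then shows $\mathbf{H}_2$ is Hopfian; since residual finiteness passes to the free product $\mathbf{H}=\mathbf{H}_2\ast\langle e,f\rangle$, the group $\mathbf{H}$ is residually finite and therefore Hopfian.

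The \emph{main obstacle} is the peripheral bookkeeping in \textbf{(i)}--\textbf{(ii)}: because the HNN moves identify loxodromic cyclic subgroups with parabolic ones inside $\mathbf{H}$, it is not a priori clear that the peripheral structure collapses to the single subgroup $\mathbf{H}$ rather than enlarging it. Controlling this—showing the associated cyclic subgroups are malnormal and quasiconvex enough that no new unbounded parabolic appears—is where the argument is most technical; the certification that the kernel word in \textbf{(iii)} is nontrivial is the second most delicate point.
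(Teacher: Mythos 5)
There is a genuine gap, and it is fatal to your part (iv): $\mathbf{H}_2$ is \emph{not} residually finite, so the route via Mal'cev's theorem cannot work. Your preliminary observations are correct ($b\mapsto bc^{-3}$, $c\mapsto c$ is an order-$3$ automorphism of $\mathbf{H}_0$, so $s^3$ is central and $\mathbf{H}_1$ is virtually cyclic, hence residually finite; and the subgroup $\langle s,t\rangle$ alone is a copy of the residually finite group $BS(1,3)$), but the interaction of the relation $t^{-1}st=s^3$ with the torsion destroys residual finiteness of $\mathbf{H}_2$. Indeed, let $\psi$ be any homomorphism from $\mathbf{H}_2$ onto a finite group. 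Since $\psi(s)$ is conjugate to $\psi(s)^3$ via $\psi(t)$, they have the same order $m$, which forces $\gcd(m,3)=1$. From $s^{-1}bs=bc^{-3}$ one gets $b^{-1}sb=sc^3$, and since $s$ and $c$ commute, $1=\psi(b^{-1}s^mb)=\psi(sc^3)^m=\psi(s)^m\psi(c^3)^m$, so $\psi(c^3)^m=1$; combined with $\psi(c^3)^3=1$ (from $c^9=1$) and $\gcd(m,3)=1$ this gives $\psi(c^3)=1$, even though $c^3\neq 1$ in $\mathbf{H}_2$. (This computation is exactly Remark 1.2(2) of the paper.) So the compatibility criterion you propose to verify for the HNN extension provably fails, and no amount of care repairs it: the paper instead proves Hopficity of $\mathbf{H}_2$ \emph{directly}, by taking an arbitrary surjective endomorphism $\varphi$ and composing with explicit automorphisms until it is the identity --- using that $\mathbf{H}_0$ is a maximal finite subgroup to force $\varphi(b)=b$, $\varphi(c)=c^k$; a fixed-point argument on the Bass--Serre tree of $\mathbf{H}_2$ over $\mathbf{H}_1$ to force $\varphi(s)\in\langle s\rangle$; one-generator and two-generator quotient obstructions (e.g.\ $\mathbf{H}_2/\llangle b,c^3,s\rrangle\cong\mathbb{Z}_3\ast\mathbb{Z}$) to pin down exponents; and Britton's lemma to force $\varphi(t)=t$.

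Your part (iii) also has a wrong detail: an endomorphism that ``fixes $\mathbf{H}$'' is incompatible with the mechanism the presentation is built around. The paper's map sends $c\mapsto c^3$, $s\mapsto s^3$, $a\mapsto s$, $u,v\mapsto 1$ (fixing $b,t,e,f,x,y$); moving $\mathbf{H}$ is essential, since setting $\psi(u)=1$ requires $\psi(c^3ec^3e^{-1})=c^9ec^9e^{-1}=1$ in the $x$-relation. Moreover no delicate Britton-style certification of the kernel element is needed: the witness is simply $c^3$, which is nontrivial because it lies in the finite subgroup $\mathbf{H}_0$, while $\tilde\psi(c^3)=c^9=1$; surjectivity follows by reading $u,v$ off the $x$- and $y$-relations and then $a$ and $c$ off the $v$- and $u$-relations. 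For (i)--(ii) your outline has the right overall shape and correctly identifies the crux (the cyclic associated subgroups must be made peripheral before combining), but it leaves that crux unproven; the paper handles it via Osin's theorem on unique maximal elementary subgroups, with normal-form/Britton computations showing $E(ac)=\langle ac\rangle$ in $\mathbf{H}\ast\langle a\rangle$ and $E(u)=\langle u\rangle$, $E(v)=\langle v\rangle$ in $\mathbf{K}$, after which the combination theorem absorbs each stable letter and deletes one peripheral subgroup at a time, ending with peripheral structure $\{\mathbf{H}\}$.
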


\begin{remark}
{\rm
\begin{enumerate}[\rm (1)]
\item
The group $\mathbf{G}$ can be regarded as a relatively hyperbolic group with respect to the subgroup $\mathbf{H}_2$.
The reason goes as follows.
Since $\mathbf{H}$ is the free product of $\mathbf{H}_2$ and $\langle e, f \rangle$,
clearly $\mathbf{H}$ is hyperbolic relative to the collection of subgroups $\{\mathbf{H}_2, \langle e, f \rangle \}$.
Here, since every finitely generated free group is hyperbolic,
$\mathbf{H}$ is hyperbolic relative to the subgroup $\mathbf{H}_2$.
This together with (ii) yields that $\mathbf{G}$ is hyperbolic relative to the subgroup $\mathbf{H}_2$.

\item The subgroup $\mathbf{H}$ is non-residually finite.
Indeed, for any finite group $P$ and for any epimorphism $\psi$
from $\mathbf{H}$ to $P$, $\psi(c^3)=1$.
The reason can be seen as follows.
From the defining relation $t^{-1}st=s^3$ of $\mathbf{H}$,
it follows that $\psi(s)$ and $\psi(s)^3$ have the same order,
so that the order of $\psi(s)$ is relatively prime to $3$, say $m$.
Also from the defining relation $s^{-1} b s=b c^{-3}$ of $\mathbf{H}$,
it follows that $b^{-1} s b= sc^3$ in $\mathbf{H}$, so that $\psi(sc^3)^m=1$.
Here, since $\psi(s)$ and $\psi(c)$ commute with each other, $\psi(c^3)^m=1$.
On the other hand, since $c^9=1$ in $\mathbf{H}$,
$\psi(c^3)^3=1$, which together with $\psi(c^3)^m=1$
finally yields $\psi(c^3)=1$.
\end{enumerate}
}
\end{remark}

This paper is organized as follows.
In Section~\ref{sec:preliminaries}, we recall necessary definitions and known results to be used throughout this paper.
The proof of Theorem~\ref{thm:main_theorem} is contained in Sections~\ref{sec:K_relatively_hyperbolic}--\ref{sec:Hopficity}.
In Section~\ref{sec:K_relatively_hyperbolic}, by using Osin's theorem
concerning the unique maximal elementary subgroups of hyperbolic elements
in relatively hyperbolic groups,
we first prove that the free product
$\mathbf{H} \ast \langle a \rangle$ is hyperbolic relative to the collection of subgroups
$\{\mathbf{H}, \langle a \rangle, \langle ac \rangle \}$.
And then by successively using Osin's combination theorem for relatively hyperbolic groups,
we show that $\mathbf{K}$ is hyperbolic relative to $\mathbf{H}$.
In Section~\ref{sec:G_relatively_hyperbolic},
again by using Osin's theorem about unique maximal elementary subgroups,
we show that the peripheral structure of $\mathbf{K}$ can be extended to the collection of subgroups
$\{\mathbf{H}, \langle u \rangle, \langle v \rangle \}$.
At this point, by using Osin's combination theorem twice,
we obtain that $\mathbf{G}$ is hyperbolic relative to $\mathbf{H}$.
In Section~\ref{sec:non_Hopficity},
we show that $\mathbf{G}$ is non-Hopfian by constructing a particular surjective, but not injective, endomorphism of $\mathbf{G}$.
To be more precise, the endomorphism of $\mathbf{G}$ induced by the mapping $b \mapsto b$, $c \mapsto c^3$, $s \mapsto s^3$,
$t \mapsto t$, $e \mapsto e$, $f \mapsto f$, $a \mapsto s$, $u \mapsto 1$, $v \mapsto 1$, $x \mapsto x$ and $y \mapsto y$
is shown to be surjective but not injective.
Finally, Section~\ref{sec:Hopficity} is devoted to the proof of that $\mathbf{H}$ is Hopfian,
in which Bass-Serre theory plays a crucial role.

\section*{Acknowledgements}
The authors would like to express their sincere gratitude to the anonymous referees
for their kind comments and suggestions,
which helped them in improving the exposition of the manuscript.
In particular, the suggestion to use Bass-Serre theory
in the proof of the fact that $\mathbf{H}_2$ is Hopfian
enabled many arguments to be significantly simplified,
and the inclusion of an alternative exposition was greatly appreciated,
as it highlighted aspects that the authors had not previously realized.
The second author was supported by Basic Science Research Program
through the National Research Foundation of Korea(NRF) funded
by the Ministry of Education, Science and Technology(2020R1F1A1A01071067).

\section{Preliminaries}
\label{sec:preliminaries}

In this section, we recall necessary definitions, notation and known results to be used throughout this paper.

\subsection{Relatively hyperbolic groups}
\label{subsec:relatively_hyperbolic_groups}

In this paper, we adopt Osin's definition~\cite{Osin2}
among many equivalent definitions of relatively hyperbolic groups.

Let $G$ be a group, $\mathbb{H}=\{H_{\lambda}\}_{\lambda \in \Lambda}$
a collection of subgroups of $G$, and $X$ a subset of $G$.
Suppose that $X$ is a relative generating set for $(G, \mathbb{H})$,
namely, $G$ is generated by the set $\big(\bigcup_{\lambda \in \Lambda} H_{\lambda} \big) \cup X$
(for convenience, we assume that $X=X^{-1}$).
Then $G$ can be regarded as the quotient group of the free product
\[
F=(\ast_{\lambda \in \Lambda} \tilde{H}_\lambda) \ast F(X),
\]
where the groups $\tilde{H}_\lambda$ are isomorphic copies of $H_\lambda$,
and $F(X)$ is the free group generated by $X$.
Let $\mathcal{H}$ be the disjoint union
\[
	\mathcal{H}=\bigsqcup_{\lambda \in \Lambda} (\tilde{H}_\lambda \setminus \{1\}).
\]
For every $\lambda \in \Lambda$, we denote by $S_{\lambda}$ the set of all words over the alphabet
$\tilde{H}_\lambda \setminus \{1\}$ that represent the identity in $F$.
Let $\mathcal{S}$ be the disjoint union
\[
\mathcal{S}=\bigsqcup_{\lambda \in \Lambda} S_{\lambda}.
\]
Then we may describe $G$ as a {\it relative presentation}
\begin{equation}
	\label{equ:relative_presentation}
	\langle X, \mathcal{H} \svert \mathcal{S}, \mathcal{R} \rangle
\end{equation}
with respect to the collection of subgroups $\{H_{\lambda}\}_{\lambda \in \Lambda}$,
where $\mathcal{R} \subseteq F$.
If both the sets $\mathcal{R}$ and $X$ are finite,
relative presentation (\ref{equ:relative_presentation}) is said to be {\it finite}
and the group $G$ is said to be {\it finitely presented relative to the
	collection of subgroups $\mathbb{H}$}.

For every word $w$ over the alphabet $X \cup \mathcal{H}$
representing the identity in the group $G$,
there exists an expression
\begin{equation}
	\label{equ:identity}
	w=_F \prod_{i=1}^k f_i^{-1} R_i f_i
\end{equation}
with the equality in the group $F$, where $R_i \in \mathcal{R}$ and $f_i \in F$ for $i=1, \dots, k$.
The smallest possible number $k$ in a presentation of the form (\ref{equ:identity})
is called the {\it relative area} of $w$ and is denoted by $Area^{rel}(w)$.

\begin{definition}[Relatively hyperbolic groups]
	\label{def:relatively hyperbolic_group}	
	{\rm
		A group $G$ is said to be {\it hyperbolic relative to a collection of subgroups $\mathbb{H}$}
		if $G$ admits a relatively finite presentation (\ref{equ:relative_presentation})
		with respect to $\mathbb{H}$
		satisfying a {\it linear relative isoperimetric inequality}.
		That is, there is a constant $C>0$ such that for any cyclically reduced word $w$
		over the alphabet $X \cup \mathcal{H}$
		representing the identity in $G$,
		we have
		\[
		Area^{rel}(w) \le C \| w \|,
		\]
		where $\|w\|$ is the length of the word $w$.
		This definition is independent of the choice of the finite
		relative generating set $X$ and the finite set $\mathcal{R}$ in (\ref{equ:relative_presentation}).		
	}	
\end{definition}

\subsection{Unique maximal elementary subgroups}

Suppose that $G$ is hyperbolic relative to a collection of subgroups
$\mathbb{H}=\{H_{\lambda}\}_{\lambda \in \Lambda}$. Then we refer to the collection $\mathbb{H}$ as a {\it peripheral structure} of $G$,
and any element in $\mathbb{H}$ as a {\it peripheral subgroup} of $G$.

An element is called {\em hyperbolic} if it has infinite order and it is not conjugate
to any element of a peripheral subgroup of $G$.
Due to Osin~\cite{Osin5}, there is a well-known example of subgroups which may be added to enlarge peripheral structures.

\begin{theorem}[{\cite[Theorem~4.3, Corollary~1.7]{Osin5}}]
\label{thm:hyperbolically_embedded}
Let $G$ be hyperbolic relative to a collection of subgroups $\mathbb{H}$.
Then for any hyperbolic element $g \in G$, $G$ is hyperbolic relative to $\mathbb{H} \cup \{E(g)\}$, where $E(g)$ is the unique maximal elementary subgroup containing $g$
defined as follows:
\[
E(g)=\{f \in G : fg^nf^{-1} = g^{\pm n} \ \textrm{for some} \ n \in \mathbb{N}\}.
\]
\end{theorem}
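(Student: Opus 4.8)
The plan is to make the relative hyperbolicity of $(G,\mathbb{H})$ geometric and to realise $E(g)$ as a boundary stabiliser for a hyperbolic action. The defining linear relative isoperimetric inequality forces the relative Cayley graph $\Gamma=\Gamma(G,X\cup\mathcal{H})$, in which the whole peripheral subgroups are adjoined as letters, to be Gromov-hyperbolic; $G$ acts on $\Gamma$ by left multiplication, and by relative hyperbolicity this action is acylindrical. First I would record that a hyperbolic element $g$, being of infinite order and not conjugate into any peripheral subgroup, is loxodromic for this action: the orbit map $n\mapsto g^{n}$ is a quasi-isometric embedding of $\ZZ$, so $g$ admits a quasigeodesic axis and fixes exactly two points $g^{+},g^{-}$ of the boundary $\partial\Gamma$, on which it acts with north--south dynamics.

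Next I would show that $E(g)$ coincides with the setwise stabiliser $S=\mathrm{Stab}\{g^{+},g^{-}\}$. The inclusion $E(g)\subseteq S$ is immediate, since $fg^{n}f^{-1}=g^{\pm n}$ forces $f$ to carry the axis of $g$ to itself up to orientation and hence to permute $\{g^{+},g^{-}\}$. For the reverse inclusion I would first prove that $S$ is elementary: the pointwise stabiliser $S^{+}$ of the two endpoints preserves the axis and displaces it only boundedly transversally, so acylindricity bounds $S^{+}$ and forces $\langle g\rangle$ to have finite index in $S^{+}$; as $[S:S^{+}]\le 2$, the group $S$ is virtually cyclic. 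Then, in a virtually cyclic group containing the infinite-order element $g$, a suitable power $g^{n}$ is preserved or inverted by every element, which gives $fg^{n}f^{-1}=g^{\pm n}$ for each $f\in S$, i.e. $S\subseteq E(g)$. Maximality and uniqueness are now formal: any elementary subgroup containing $g$ is virtually cyclic, contains $\langle g\rangle$ with finite index, fixes the endpoints of the $g$-axis, and therefore lies in $E(g)$.

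The substantive assertion is that $G$ is hyperbolic relative to the enlarged collection $\mathbb{H}\cup\{E(g)\}$. I would obtain this from the statement that $E(g)$ is hyperbolically embedded in $(G,\mathbb{H})$, which is exactly the condition permitting one to append a subgroup to the peripheral structure while retaining a finite relative presentation that satisfies a linear relative isoperimetric inequality. Concretely, one adjoins the alphabet of $E(g)$ to $X\cup\mathcal{H}$, adds the relations recording the inclusion $E(g)\hookrightarrow G$, and then converts any relative-to-$\mathbb{H}$ filling of a word $w$ representing $1$ into a relative-to-$(\mathbb{H}\cup\{E(g)\})$ filling of comparable area, by replacing each maximal excursion of the boundary along a coset of $E(g)$ with a single $E(g)$-syllable; the number and cost of these replacements must be bounded linearly in $\|w\|$.

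The main obstacle is precisely this last bound, which rests on showing that $E(g)$ is strongly quasiconvex in $\Gamma$ and that distinct cosets $f_{1}E(g)\neq f_{2}E(g)$ fellow-travel only over uniformly bounded length --- a bounded-coset-penetration property for $E(g)$ mirroring the one the original peripheral subgroups enjoy. Proving it demands uniform control of how an arbitrary geodesic, or the boundary of a relative van Kampen diagram, can track the axis of $g$ and its translates, ruling out long uncontrolled backtracking along $E(g)$-cosets; this is exactly where acylindricity of the action along the quasigeodesic axis does the real work, and it is the delicate estimate at the heart of Osin's argument. Once it is established, the enlargement of the peripheral structure and the linear relative isoperimetric inequality for the new relative presentation follow formally.
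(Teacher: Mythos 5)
This statement is quoted in the paper from Osin \cite[Theorem~4.3, Corollary~1.7]{Osin5}; the paper contains no proof of it, so the only benchmark is Osin's original argument. The first half of your proposal is sound in outline and broadly parallel to Osin's: a hyperbolic element acts loxodromically on the relative Cayley graph, $E(g)$ coincides with the setwise stabiliser of its two fixed boundary points, is virtually cyclic, and is the unique maximal elementary subgroup containing $g$. Two caveats, though. Osin obtains this via positive relative translation numbers and the relative quasiconvexity of $\langle g \rangle$ (results from his memoir), not via acylindricity: acylindricity of the $G$-action on the relative Cayley graph is itself a nontrivial theorem proved by Osin only years after the cited result. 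Invoking it is logically admissible (the proof is not circular) but it is a heavy imported theorem, not something that follows ``by relative hyperbolicity'' as your first sentence suggests, and your sketch of why the pointwise stabiliser $S^{+}$ is commensurable with $\langle g \rangle$ (``displaces the axis only boundedly transversally'') would need the standard WPD-style finiteness argument to be made precise.

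The genuine gap is in the second half. You write that you ``would obtain this from the statement that $E(g)$ is hyperbolically embedded in $(G,\mathbb{H})$'' --- but in Osin's terminology, $E(g)$ being hyperbolically embedded \emph{is by definition} the assertion that $G$ is hyperbolic relative to $\mathbb{H} \cup \{E(g)\}$, so this sentence relabels the goal rather than reducing it. The actual content of Osin's proof lies in (a) his criterion that a strongly relatively quasiconvex subgroup is hyperbolically embedded, and (b) the verification that $E(g)$ is strongly relatively quasiconvex: quasigeodesics with endpoints near cosets of $E(g)$ uniformly track translates of the axis of $g$, and $E(g)$ meets conjugates of the peripheral subgroups in finite sets. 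You correctly identify the analogous estimate in your framework --- strong quasiconvexity of $E(g)$ together with a bounded-coset-penetration property, yielding a linear bound on the number and cost of the syllable replacements in your filling surgery --- but you then declare it ``the main obstacle'' and ``the delicate estimate at the heart of Osin's argument'' and leave it unproved. Since the linear relative isoperimetric inequality for the enlarged presentation is precisely the theorem to be established, what you have is a correct road map whose decisive quantitative step is missing; everything after that step is, as you say yourself, formal, but nothing in the proposal supplies the step.
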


\subsection{Osin's combination theorem}
\label{subsec:combination_theorems}

We recall one of Osin's combination theorems for relatively hyperbolic groups.
Earlier, Dahmani~\cite{Dahmani} proved the following combination theorem for finitely generated groups.
In fact, applying Dahmani's combination theorem is sufficient for our purposes in this paper,
but we introduce Osin's combination theorem
in order to match with the definition of relatively hyperbolic groups stated above.

\begin{theorem}[{\cite[Corollary 1.4]{Osin2}}]
\label{thm:osin_combination}
Suppose that a group $G$ is hyperbolic relative to a collection
of subgroups $\mathbb{H}=\{H_{\lambda}\}_{\lambda \in \Lambda}$.
Assume in addition that there exists a monomorphism $\iota: H_{\mu} \rightarrow H_{\nu}$
for some $\mu \neq \nu \in \Lambda$,
and that $H_{\mu}$ is finitely generated.
Then the HNN-extension
\[
G^*=\langle G, t \svert t^{-1} h t=\iota(h), \ h \in H_{\mu} \rangle
\]
is hyperbolic relative to the collection $\mathbb{H} \setminus \{H_{\mu}\}$.
\end{theorem}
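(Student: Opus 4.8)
The plan is to build an explicit finite relative presentation of $G^*$ with respect to the reduced collection $\mathbb{H}'=\mathbb{H}\setminus\{H_\mu\}$ and then to verify that it satisfies a linear relative isoperimetric inequality, bootstrapping off the one already available for $G$. First I would fix a finite relative presentation $\langle X,\mathcal{H}\svert \mathcal{S},\mathcal{R}\rangle$ of $G$ with respect to $\mathbb{H}$ satisfying $Area^{rel}(w)\le C\|w\|$, and choose a finite generating set $Y_\mu=\{y_1,\dots,y_n\}$ of $H_\mu$; this is the one place where the finite generation hypothesis is used. As a relative generating set for $(G^*,\mathbb{H}')$ I take $X^*=X\cup Y_\mu\cup\{t,t^{-1}\}$: since $H_\mu$ is no longer peripheral, its generators must now be listed explicitly, while the remaining $H_\lambda$ with $\lambda\neq\mu$ stay peripheral and contribute the alphabet $\mathcal{H}'=\bigsqcup_{\lambda\neq\mu}(\tilde H_\lambda\setminus\{1\})$. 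For the relations I set $\mathcal{R}^*=\mathcal{R}\cup\{\,t^{-1}y_it\,\iota(y_i)^{-1}:1\le i\le n\,\}$, where each $\iota(y_i)\in H_\nu$ is read as a single letter of $\tilde H_\nu\subset\mathcal{H}'$. This $\mathcal{R}^*$ is finite, so the presentation $\langle X^*,\mathcal{H}'\svert \mathcal{S}',\mathcal{R}^*\rangle$ is finite, with $\mathcal{S}'=\bigsqcup_{\lambda\neq\mu}S_\lambda$.

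I would then check that this really presents $G^*$. The only relations of $G$ that are not directly retained are those of $H_\mu$ among the $y_i$, which now disappear from $\mathcal{S}'$; but each such relation $W(y_1,\dots,y_n)=1$ is recovered from the free relation $\iota(W)=1$ of $S_\nu$ after conjugating letter by letter by $t$, i.e.\ $W=_F t\,\iota(W)\,t^{-1}$ modulo $\mathcal{R}^*$. This is exactly where the hypothesis $\mu\neq\nu$ enters: $H_\nu$ must survive as a peripheral subgroup so that it can absorb the relations of $H_\mu$.

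The core of the argument is the isoperimetric estimate. Given a cyclically reduced word $w$ over $X^*\cup\mathcal{H}'$ of length $\ell$ representing $1$ in $G^*$, I would take a reduced van Kampen diagram $\Delta$ for $w$ over $\langle X^*,\mathcal{H}'\svert \mathcal{S}',\mathcal{R}^*\rangle$ and analyze its $t$-edges by the band (corridor) technique. The cells labelled by $\mathcal{R}^*\setminus\mathcal{R}$ are precisely those carrying $t$-edges, and they assemble into $t$-bands; since $w$ is reduced, no band is an annulus, so each band runs between two boundary occurrences of $t^{\pm1}$, giving at most $\ell/2$ bands. One side of each band reads a word in $Y_\mu$ and the other a word of letters of $\tilde H_\nu$, and the number of $\mathcal{R}^*\setminus\mathcal{R}$-cells in a band equals the $Y_\mu$-length of its $H_\mu$-side. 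Cutting $\Delta$ along all bands leaves complementary subdiagrams over the original presentation $\langle X,\mathcal{H}\svert \mathcal{S},\mathcal{R}\rangle$ of $G$, whose $\mathcal{R}$-cells I can count by applying the linear relative isoperimetric inequality of $G$ to their boundaries.

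The hard part will be the bookkeeping that keeps every quantity linear in $\ell$, and I expect this to be the main obstacle. Two points need care. First, the total number of $\mathcal{R}^*\setminus\mathcal{R}$-cells equals the combined $Y_\mu$-length transported across all bands, and a priori the $H_\mu$-side of a band lies in the interior of $\Delta$, not on $\partial\Delta$; I must show that reduced bands do not inflate the $H_\mu$-syllables, so that the contribution stays $O(\ell)$. Second, the boundaries of the complementary $G$-subdiagrams are built from arcs of $w$ together with the band sides, and the latter are peripheral $\tilde H_\nu$-arcs; to invoke $G$'s inequality I need their total \emph{relative} length to be $O(\ell)$, and this is exactly where the convention that each maximal peripheral subword counts as a single letter is essential, since a long $\tilde H_\nu$-side collapses to one letter in relative length. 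Assembling these bounds yields $Area^{rel}(w)\le C'\ell$ for a constant $C'$ depending only on $C$, on $n$, and on the chosen presentation, which by the definition of relative hyperbolicity shows that $G^*$ is hyperbolic relative to $\mathbb{H}'$.
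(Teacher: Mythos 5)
Note first that the paper contains no proof of this statement: it is quoted as a black box from Osin \cite{Osin2} (with Dahmani's combination theorem \cite{Dahmani} mentioned as an alternative), so there is no in-paper argument to compare against, and your attempt has to be judged on its own merits. Your setup is essentially sound: the finite relative presentation $\langle X^*, \mathcal{H}' \mid \mathcal{S}', \mathcal{R}^* \rangle$ does present $G^*$, modulo the small repair that relators of $\mathcal{R}$ containing letters of $\tilde H_\mu$ must first be rewritten as words in $Y_\mu$, since $\tilde H_\mu$ is no longer part of the alphabet; and your derivation of the relations of $H_\mu$ by letter-by-letter $t$-conjugation into $S_\nu$ is correct and correctly identifies where the hypothesis $\mu \neq \nu$ enters. (A secondary, fixable glitch: the absence of $t$-annuli comes from choosing $\Delta$ with a minimal number of $t$-cells, so that an innermost annulus can be excised and refilled, not from $w$ being cyclically reduced.)

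The genuine gap is exactly the step you flag and then defer: the linear bound on the total length of the $t$-bands, equivalently on the number of $\mathcal{R}^* \setminus \mathcal{R}$-cells and on the $Y_\mu$-lengths of the band sides fed into the complementary $G$-subdiagrams. Reducedness of $\Delta$ does not give this, and neither does minimizing the number of $t$-cells, which only ensures that each band's $\mu$-side is a $Y_\mu$-geodesic word for the element $h_j \in H_\mu$ it carries; you then still need $\sum_j |h_j|_{Y_\mu} = O(\ell)$, and no amount of relative-length bookkeeping can deliver this. Indeed, every nontrivial element of $H_\mu$ is a \emph{single letter} of the old alphabet $\mathcal{H}$, i.e.\ has relative length $1$ in $G$, and the linear relative isoperimetric inequality for $G$ counts $\mathcal{R}$-cells only, bounding area \emph{above} by boundary length; a long internal band merely forces long complementary boundaries, which is perfectly consistent with area minimality, so nothing in your framework prevents the $|h_j|_{Y_\mu}$ from being unbounded in terms of $\ell$. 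Controlling the band lengths requires genuine geometric input about how $H_\mu$ and $H_\nu$ sit inside $G$ --- e.g.\ the bounded coset penetration property, strong relative quasiconvexity/undistortion of peripheral subgroups, or Osin's analysis of components of paths in the relative Cayley graph --- and supplying that input is the actual content of Osin's proof. As written, your proposal correctly assembles the formal scaffolding but reduces the theorem to precisely its hard core and leaves that core unproven.
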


\subsection{Bass-Serre trees for HNN-extensions}

We recall some basic concepts of Bass-Serre theory (see~\cite{Bass, Serre}).
A {\em graph of groups} $(\mathcal{A}, X)$ consists of
a connected graph $X$ and a collection of groups indexed by the vertices and edges of $X$,
and a family of monomorphisms from the edge groups to the adjacent vertex groups.
For each spanning tree $T$ in $X$,
one can canonically associate a unique group, called the {\em fundamental group} and denoted $\pi_1(\mathcal{A}, T)$.
Here, it turns out that the fundamental group $\pi_1(\mathcal{A}, T)$ is independent of the choice of a spanning tree $T$,
so that we simply write $\pi_1(\mathcal{A})$ instead of $\pi_1(\mathcal{A}, T)$.
The fundamental group $\pi_1(\mathcal{A})$
admits an orientation-preserving action on a tree $\Gamma$
such that the quotient graph $\mathcal{A} / \pi_1(\mathcal{A})$ is isomorphic to $X$.
Such a tree is called a {\em Bass-Serre tree} of $\mathcal{A}$.

On the other hand, given a graph of groups $(\mathcal{A}, X)$ with the fundamental
group $G \cong \pi_1(\mathcal{A})$, where $G$ is an HNN-extension,
one can construct a Bass-Serre tree for $G$ due to the following theorem (see, for example, \cite{Yang}).
This result plays an important role in the proof of
Proposition~\ref{prop:phi_s_1}.

\begin{theorem}[Bass-Serre trees for HNN-extensions]
\label{thm:Bass_Serre_HNN_extension}
Suppose that $G^*$ is an HNN-extension of a group $G$
with associated isomorphism $\iota$ between two subgroups $H$ and $K$,
that is,
\[
G^*=\langle G, t \svert t^{-1} h t = \iota(h), \ h \in H \rangle.
\]
Let $\mathcal{A}$ be a graph of groups
consisting of a single loop-edge $e$,
a single vertex $v=o(e)=t(e)$,
a vertex group $G$, an edge group $H$,
and the boundary monomorphisms $\alpha_e: H \rightarrow G$ and $\omega_e: H \rightarrow G$.
Then the fundamental group of $\mathcal{A}$ is clearly isomorphic to $G^*$.
On the other hand, let $\Gamma$ be a graph defined as follows.
\begin{enumerate}[\indent \rm (i)]
\item The vertex set $V$ consists of all cosets in $\{xG \, | \, x \in G^*\}$.
\item The edge set $E$ consists of all cosets in $\{xH \, | \, x \in G^*\}$.
\item The edge $xH \in E$ connects $xG$ and $xtG$.	
\end{enumerate}	
Then $\Gamma$ is a tree and
$G^*$ acts on $\Gamma$ without inversion by left multiplication
such that the quotient graph $\Gamma /G^*$ is isomorphic to $X$,
where $X$ is the underlying graph of $\mathcal{A}$.	
\end{theorem}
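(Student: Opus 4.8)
The plan is to check four things in turn: that the edge-endpoint rule in (iii) is well defined, that left multiplication gives an action on $\Gamma$ without inversion, that $\Gamma$ is connected, and that $\Gamma$ has no circuits; the quotient computation is then immediate. Of these, only the acyclicity is substantial, and for it I would appeal to Britton's Lemma (the normal form theorem for HNN-extensions).

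For well-definedness, if $xH=yH$ then $x^{-1}y\in H\subseteq G$, so $xG=yG$, and moreover $t^{-1}(x^{-1}y)t=\iota(x^{-1}y)\in K\subseteq G$, so $(xt)^{-1}(yt)\in G$ and $xtG=ytG$; hence the two endpoints of $xH$ depend only on the coset. Since $t\notin G$ (again by Britton's Lemma), one has $xG\neq xtG$, so $\Gamma$ has no loop-edges. Left multiplication $g\cdot(xG)=gxG$, $g\cdot(xH)=gxH$ preserves incidence because $gxH$ joins $gxG$ and $gxtG$. To see the action is without inversion, suppose $g(xH)=xH$; then $gx=xh$ for some $h\in H$, so $g=xhx^{-1}$ and $g\cdot(xG)=xhG=xG$, meaning $g$ fixes an endpoint rather than interchanging the two endpoints of the edge.

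For connectedness, note that from any vertex $wG$ and any $a\in G$ the edge $waH$ joins $wG$ to $watG$, while the edge $wat^{-1}H$ joins $wat^{-1}G$ to $waG=wG$; in particular $wG$ is adjacent to $wt^{\pm1}G$, and of course $wG=wg_0G$ for every $g_0\in G$. Since $G^*$ is generated by $G$ together with $t$, an induction on the length of a word representing $x$ shows that $xG$ can be joined to the base vertex $G$ by an edge-path. The core step is then to show that $\Gamma$ has no circuit. Here I would set up a dictionary between edge-paths issuing from $G$ and words over $G\cup\{t^{\pm1}\}$: a path $G=v_0,v_1,\dots,v_n$ determines, step by step, elements $a_i\in G$ and signs $\epsilon_i\in\{\pm1\}$ with $v_n=wG$ for $w=a_0t^{\epsilon_0}a_1t^{\epsilon_1}\cdots a_{n-1}t^{\epsilon_{n-1}}$, and the path is reduced (no backtracking, $e_{i+1}\neq e_i$) exactly when $w$ is $t$-reduced, i.e. contains no pinch $t^{-1}at$ with $a\in H$ nor $tat^{-1}$ with $a\in K$. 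A reduced circuit of length $n\ge1$ based at $G$ would produce such a $t$-reduced $w$ with $wG=G$, hence $w\in G$; but by Britton's Lemma no $t$-reduced word containing at least one letter $t^{\pm1}$ lies in $G$. This contradiction forces every reduced closed path to be trivial, so the connected graph $\Gamma$ is a tree.

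Finally, $G^*$ acts transitively on $V$ and on $E$ (since $x\cdot G=xG$ and $x\cdot H=xH$), so $\Gamma/G^*$ has exactly one vertex-orbit and one edge-orbit; as the edge $H$ joins $G$ and $tG$, which lie in the same vertex-orbit, its image is a loop and $\Gamma/G^*\cong X$, with vertex stabilizer $G$ and edge stabilizer $H$ matching the vertex group and edge group of $\mathcal{A}$. I expect the main obstacle to be precisely the dictionary between reduced edge-paths and $t$-reduced words, together with the correct bookkeeping of the pinch conditions, since this is exactly where Britton's Lemma enters; the remaining verifications are routine.
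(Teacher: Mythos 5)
Your proof is correct, but note that the paper itself offers no proof of this theorem: it is quoted as a standard fact of Bass--Serre theory with a pointer to \cite{Yang}, so there is no internal argument to compare against. Your argument is exactly the standard one from the literature: well-definedness of the incidence rule via $t^{-1}(x^{-1}y)t=\iota(x^{-1}y)\in K$, connectedness by induction on syllable length, and acyclicity through the dictionary matching reduced edge-paths based at the vertex $G$ with $t$-reduced words, where Britton's Lemma rules out a $t$-reduced word with at least one $t^{\pm1}$ representing an element of $G$; your identification of backtracking with the pinches $t^{-1}at$ ($a\in H$) and $tat^{-1}$ ($a\in K$) is the correct bookkeeping. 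Two pedantic points you may wish to make explicit: an arbitrary circuit is translated to one based at $G$ by vertex-transitivity before the dictionary is applied, and in the inversion argument the stabilizer element $g=xhx^{-1}$ in fact fixes both endpoints, since $xhtG=xt\iota(h)G=xtG$.
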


\section{Proof of Theorem~1.1({\rm i})}
\label{sec:K_relatively_hyperbolic}

Let $\mathbf{H}$ and $\mathbf{K}$ be the groups
defined in the statement of Theorem~\ref{thm:main_theorem}.
The aim of this section is to prove the relative hyperbolicity of $\mathbf{K}$ with respect to the subgroup
$\mathbf{H}$.

We start with the free product $\mathbf{H} \ast \langle a \rangle$,
which is clearly hyperbolic relative to the collection of subgroups
$\{\mathbf{H}, \langle a \rangle\}$.
Recall that
\begin{equation}
\label{equ:free_product}
\begin{aligned}
\mathbf{H} \ast \langle a \rangle=\langle b, c, s, t, a \svert
&b^2=c^9=1, \ b^{-1}cb=c^{-1}, \\
&s^{-1}bs=bc^{-3}, \ s^{-1}cs=c, \ t^{-1}st=s^3 \rangle.
\end{aligned}
\end{equation}
Clearly, $ac$ is a hyperbolic element in $\mathbf{H} \ast \langle a \rangle$
seen as a relatively hyperbolic group with peripheral structure $\{\mathbf{H}, \langle a \rangle\}$.
Moreover, we can prove the following

\medskip
\noindent {\bf Claim~A.}
{\it The unique maximal elementary subgroup $E(ac)$ of $\mathbf{H} \ast \langle a \rangle$
is precisely the infinite cyclic subgroup $\langle ac \rangle$.}

\begin{proof}
Suppose to the contrary that $E(ac) \setminus \langle ac \rangle \neq \emptyset$.
Among all such elements in $E(ac) \setminus \langle ac \rangle$,
we take an element in normal form, say $f$,
with shortest syllable length.
Here, by the {\em syllable length}, we mean
the total number of syllables
which are maximal subwords consisting entirely of letters from either $\mathbf{H}$ or $\langle a \rangle$.
For such $f$, clearly $f(ac)^{\pm n}f^{-1}=(ac)^n$ for some $n \in \mathbb{N}$.
Moreover, $f$ satisfies the following.
\begin{enumerate}[\ \rm (i)]
\item $f$ does not begin with $ah$ for any $1 \neq h \in \mathbf{H}$,
nor with $c^{-1}a'$ for any $1 \neq a' \in \langle a \rangle$;

\item $f$ does not end with $ha^{-1}$ for any $1 \neq h \in \mathbf{H}$,
nor with $a'c$ for any $1 \neq a' \in \langle a \rangle$.
\end{enumerate}
The reason goes as follows.
First, assume that $f$ begins with $ah$ for some $1 \neq h \in \mathbf{H}$,
that is, $f \equiv ah f_1$ in normal form.
Then it follows from the equality $f(ac)^{\pm n}f^{-1}=(ac)^n$ that
\[
(ac)(c^{-1}hf_1)(ac)^{\pm n} (f_1^{-1}h^{-1}c)(c^{-1}a^{-1})=(ac)^n,
\]
so that
\[
(c^{-1}hf_1)(ac)^{\pm n} (f_1^{-1}h^{-1}c)=(ac)^n;
\]
thus $c^{-1}hf_1 \in E(ac) \setminus \langle ac \rangle$.
Here, since $c^{-1}h \in \mathbf{H}$,
the element $c^{-1}hf_1$ has shorter syllable length
than $f$ does. This is a contradiction to our choice of $f$.
Next, assume that $f$ begins with $c^{-1}a'$ for some $1 \neq a' \in \langle a \rangle$,
that is, $f \equiv c^{-1}a' f_2$ in normal form.
Then
\[
(c^{-1}a^{-1})(aa'f_2)(ac)^{\pm n} (f_2^{-1}{a'}^{-1}a^{-1})(ac)=(ac)^n,
\]
and so
\[
(aa'f_2)(ac)^{\pm n} (f_2^{-1}{a'}^{-1}a^{-1})=(ac)^n.
\]
This means that $aa'f_2 \in E(ac) \setminus \langle ac \rangle$.
In addition, $aa'f_2$ has shorter syllable length
than $f$ does, since $aa' \in \langle a \rangle$.
This is also a contradiction to our choice of $f$.
So (i) holds.

For (ii), note that $f^{-1} \in E(ac) \setminus \langle ac \rangle$
with the same syllable length as that of $f$.
Also, if $f$ ends with $ha^{-1}$ for some $1 \neq h \in \mathbf{H}$,
or with $a'c$ for some $1 \neq a' \in \langle a \rangle$,
then
$f^{-1}$ begins with $ah^{-1}$ or with $c^{-1}{a'}^{-1}$.
But then by the same argument applied to $f$ in the proof of (i),
we reach a contradiction. So (ii) holds.

But then the expression $f(ac)^{\pm n}f^{-1}(ac)^{-n}$
cannot represent the identity element in $\mathbf{H} \ast \langle a \rangle$
by the normal form theorem for free products.
This contradiction completes the proof of the claim.
\end{proof}

The above Claim~A together with Theorem~\ref{thm:hyperbolically_embedded}
yields that $\mathbf{H} \ast \langle a \rangle$
is hyperbolic relative to the collection of subgroups
$\{\mathbf{H}, \langle a \rangle, \langle ac \rangle \}$.
Then due to Theorem~\ref{thm:osin_combination},
the group $\langle \mathbf{H} \ast \langle a \rangle, u \svert u^{-1}b(ac)b^{-1} u = a \rangle$
is hyperbolic relative to the collection of subgroups
$\{\mathbf{H}, \langle a \rangle\}$.
Finally, the group
$\mathbf{K}=\langle \mathbf{H} \ast \langle a \rangle, u, v \svert u^{-1} (bacb^{-1}) u =a, \ v^{-1} a v = tst^{-1} \rangle$
is hyperbolic relative to the subgroup
$\mathbf{H}$ again by Theorem~\ref{thm:osin_combination},
completing the proof of Theorem~\ref{thm:main_theorem}(i).

\section{Proof of Theorem~1.1({\rm ii})}
\label{sec:G_relatively_hyperbolic}

Let $\mathbf{H}$, $\mathbf{K}$ and $\mathbf{G}$ be the groups
defined in the statement of Theorem~\ref{thm:main_theorem}.
The aim of this section is to prove the relative hyperbolicity
of $\mathbf{G}$ with respect to the subgroup $\mathbf{H}$.
By the result of Section~\ref{sec:K_relatively_hyperbolic},
$\mathbf{K}$ is relatively hyperbolic with peripheral structure $\{\mathbf{H}\}$.

Since $\mathbf{K}$ is a multiple HNN-extension of
$\mathbf{H} \ast \langle a \rangle$ with stable letters $u$ and $v$,
the element $u$ is clearly a hyperbolic element in $\mathbf{K}$.
Moreover, we can prove the following

\medskip
\noindent {\bf Claim B.}
{\it The unique maximal elementary subgroup $E(u)$ of
$\mathbf{K}$ is precisely the cyclic subgroup $\langle u \rangle$.}

\begin{proof}
To find $E(u)$, view $\mathbf{K}$ as an HNN-extension with stable letter $u$ of
\[
\mathbf{L}:=\langle \mathbf{H} \ast \langle a \rangle, v \rangle \le \mathbf{K}.
\]
Suppose to the contrary that $E(u) \setminus \langle u \rangle \neq \emptyset$.
Among all such elements in $E(u) \setminus \langle u \rangle$,
we take an element, say $f$,  in $u$-reduced form with minimal number of $u^{\pm 1}$.
For such $f$, clearly $fu^{\pm n}f^{-1}=u^n$ for some $n \in \mathbb{N}$.
Moreover, $f$ satisfies the following.
\begin{enumerate}[\ \rm (i)]
\item $f$ does not begin with $hu$ for any $h \in \langle bacb^{-1} \rangle$,
nor with $a'u^{-1}$ for any $a' \in \langle a \rangle$;

\item $f$ does not end with $u^{-1} h$ for any $h \in \langle bacb^{-1} \rangle$,
nor with $ua'$ for any $a' \in \langle a \rangle$.
\end{enumerate}
The reason is as follows.
First, assume that $f$ begins with $hu$ for some $h \in \langle bacb^{-1} \rangle$,
that is, $f \equiv huf_1$ ($u$-reduced).
It then follows from the equality
$fu^{\pm n}f^{-1}=u^n$ that
\[
\{(u^{-1}hu) f_1\} u^{\pm n} \{f_1^{-1} (u^{-1}h^{-1}u)\}=u^n,
\]
so that
\[
(a'f_1)u^{\pm n}(f_1^{-1}a'^{-1})=u^n
\]
for some $a' \in \langle a \rangle$.
Thus $a'f_1 \in E(u) \setminus \langle u \rangle$.
But clearly $a'f_1$ has fewer number of $u^{\pm 1}$ than $f$ does,
which is a contradiction to the choice of $f$.
Next, assume that $f$ begins with $a'u^{-1}$ for some $a' \in \langle a \rangle$,
that is, $f \equiv a'u^{-1}f_2$ ($u$-reduced).
Then
\[
\{(ua'u^{-1}) f_2\} u^{\pm n} \{f_2^{-1} (u{a'}^{-1}u^{-1})\}=u^n,
\]
and so
\[
(hf_2)u^{\pm n}(f_2^{-1}h^{-1})=u^n
\]
for some $h \in \langle bacb^{-1} \rangle$.
Hence $hf_2 \in E(u) \setminus \langle u \rangle$.
In addition, $hf_2$ has fewer number of $u^{\pm 1}$ than $f$ does,
which is also a contradiction to the choice of $f$.
Therefore, (i) holds.

For (ii), note that $f^{-1} \in E(u) \setminus \langle u \rangle$
with the same number of $u^{\pm 1}$ as $f$ has.
Also, if $f$ ends with $u^{-1} h$ for some $h \in \langle bacb^{-1} \rangle$,
or with $ua'$ for any $a' \in \langle a \rangle$,
then
$f^{-1}$ begins with $h^{-1}u$ or with ${a'}^{-1}u^{-1}$.
But then by the same argument applied to $f$ in the proof of (i),
we reach a contradiction. So (ii) holds.

But then the expression $fu^{\pm n}f^{-1}u^{-n}$
cannot not represent the identity element in $\mathbf{K}$ by Britton's Lemma.
This contradiction completes the proof of the claim.
\end{proof}

By Claim~B together with Theorem~\ref{thm:hyperbolically_embedded},
the peripheral structure $\mathbf{K}$ can be extended to
$\{\mathbf{H}, \langle u \rangle \}$.
In this point of view, $v$ is a hyperbolic element in $\mathbf{K}$,
since $v$ has infinite order, and since is not conjugate to any element of $\mathbf{H}$
nor to any element of $\langle u \rangle$.
Moreover,
the unique maximal elementary subgroup $E(v)$ of
$\mathbf{K}$ is precisely the cyclic subgroup $\langle u \rangle$.
To see this,
view $\mathbf{K}$ as an HNN-extension with stable letter $v$ of
\[
\mathbf{M}:=\langle \mathbf{H} \ast \langle a \rangle, u \rangle \le \mathbf{K},
\]
and then apply a similar argument as in the proof of Claim~B.
This together with Theorem~\ref{thm:hyperbolically_embedded} again,
the peripheral structure of $\mathbf{K}$ can be extended further to
the collection of subgroups
$\{\mathbf{H}, \langle u \rangle, \langle v \rangle \}$.

Clearly there exist monomorphisms
$\iota: \langle u \rangle \rightarrow \mathbf{H}$ and $\zeta:\langle v \rangle \rightarrow \mathbf{H}$
defined by
$\iota(u)=c^3ec^3e^{-1}$ and $\zeta(v)=c^3fc^3f^{-1}$.
Then by applying Theorem~\ref{thm:osin_combination} twice,
we finally obtain that
$\mathbf{G}=\langle \mathbf{K}, x, y \svert x^{-1}ux=c^3ec^3e^{-1}, \ y^{-1}vy=c^3fc^3f^{-1} \rangle$
is hyperbolic relative to $\mathbf{H}$. This completes the proof of Theorem~\ref{thm:main_theorem}(ii).

\section{Proof of Theorem~1.1({\rm iii})}
\label{sec:non_Hopficity}

Let $\mathbf{G}$ be the group defined in the statement of Theorem~\ref{thm:main_theorem}.
The aim of this section is to prove that
$\mathbf{G}$ is a non-Hopfian group
by constructing a surjective, but not injective, endomorphism of $\mathbf{G}$.

Let $\psi$ be a unique homomorphism from a free group with basis
$\{b, c, s, t, e, f, a, u,$ $v, x, y\}$ to $\mathbf{G}$ induced by the mapping
\[
\begin{aligned}
b &\mapsto b, \quad c \mapsto c^3, \quad s \mapsto s^3, \quad t \mapsto t, \quad e \mapsto e, \quad f \mapsto f, \\
a &\mapsto s, \quad u \mapsto 1, \quad v \mapsto 1, \quad x \mapsto x \quad \text{and} \quad y \mapsto y.
\end{aligned}
\]
Then it is easy to see that
every defining relator in presentations (\ref{equ:H_0_presentation})--(\ref{equ:G_presentation})
is sent to the identity element in $\mathbf{G}$ by $\psi$.
So $\psi$ induces an endomorphism $\tilde{\psi}$ of $\mathbf{G}$.

We will show that $\tilde{\psi}$ is surjective but not injective.
To prove that $\tilde{\psi}$ is surjective,
it is sufficient to show that
$a, c, u, v \in \im \, \tilde{\psi}$.
From the $x$- and $y$-relations in presentation (\ref{equ:G_presentation}),
it follows that $u, v \in \im \, \tilde{\psi}$.
This together with the $v$-relation in presentation (\ref{equ:K_presentation})
yields that $a \in \im \, \tilde{\psi}$,
so that $c \in \im \, \tilde{\psi}$ from the $u$-relation in presentation (\ref{equ:K_presentation}).
Therefore, $\tilde{\psi}$ is a surjective endomorphism of $\mathbf{G}$.

However, $\tilde{\psi}$ is not injective,
since $c^3 \in \ker \, \tilde{\psi}$ but $c^3 \stackrel{\mathbf{G}}{\neq} 1$.
Indeed,
$c^3$ is contained in the finite subgroup $\mathbf{H}_0$ of $\mathbf{G}$;
so if $c^3 \stackrel{\mathbf{G}}{=} 1$, then $c^3 \stackrel{\mathbf{H}_0}{=} 1$,
which is obviously a contradiction.

\section{Proof of Theorem~1.1({\rm iv})}
\label{sec:Hopficity}

Let $\mathbf{H}$ be the group defined in the statement of Theorem~\ref{thm:main_theorem}.
The aim of this section is to prove that $\mathbf{H}$ is a Hopfian group.
Since the free product of two finitely generated Hopfian groups is also Hopfian (see \cite{Dey}),
it suffices to prove that $\mathbf{H}_2$ is Hopfian.

Throughout this section, let $\varphi$ be a surjective endomorphism of $\mathbf{H}_2$.
We begin with the following

\begin{proposition}
	\label{prop:phi_b_c}
	We may assume that $\varphi(b)=b$ and $\varphi(c)=c^k$ with $k \in \mathbb{Z}$.
\end{proposition}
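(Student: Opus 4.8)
The plan is to exploit the algebraic structure of $\mathbf{H}_0$ and the way it sits inside $\mathbf{H}_2$ to pin down $\varphi(b)$ and $\varphi(c)$ up to the stated normalization. First I would record that $\mathbf{H}_0$ is the dihedral-type group of order $18$: from the relations $b^2=c^9=1$ and $b^{-1}cb=c^{-1}$, the subgroup $\langle c\rangle$ is a normal cyclic group of order $9$, and $\mathbf{H}_0=\langle c\rangle \rtimes \langle b\rangle$ has order $18$. Crucially, $\mathbf{H}_0$ is precisely the torsion subgroup of $\mathbf{H}_2$: the successive HNN-extensions in (\ref{equ:H_1_presentation})--(\ref{equ:H_2_presentation}) add the infinite-order stable letters $s$ and $t$, and by Britton's Lemma every torsion element of an HNN-extension is conjugate into the base group. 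Hence any element of finite order in $\mathbf{H}_2$ is conjugate into $\mathbf{H}_0$.

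**Locating $\varphi(c)$.**
Since $c$ has order $9$, its image $\varphi(c)$ is a torsion element, hence conjugate into $\mathbf{H}_0$; the same holds for $\varphi(b)$ since $b^2=1$. The key step is to show that, after composing $\varphi$ with a suitable inner automorphism of $\mathbf{H}_2$, we may arrange both images to land inside $\mathbf{H}_0$ itself. The natural approach is to use that $\varphi$ is \emph{surjective}: I would argue that the finite subgroup $\mathbf{H}_0$ must be hit, and analyze the conjugators simultaneously for the commuting pair $b,c$. Within $\mathbf{H}_0$, the elements of order $9$ are exactly the powers $c^k$ with $\gcd(k,9)=1$, i.e.\ $k \in \{\pm1,\pm2,\pm4\}$, so $\varphi(c)=c^k$ for such $k$; allowing the normalization to absorb the case where the conjugate of $\varphi(c)$ is any power, I would phrase the conclusion as $\varphi(c)=c^k$ with $k\in\mathbb{Z}$, noting that surjectivity will later force $\gcd(k,9)=1$.

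**Locating $\varphi(b)$.**
For $b$, the involutions in $\mathbf{H}_0$ are exactly the elements $c^j b$ for $j=0,\dots,8$ (all conjugate to $b$, since $c^{-j}(c^jb)c^{j}= c^{j}b\cdot\text{(adjustment)}$, using $b^{-1}cb=c^{-1}$). Thus after the same inner automorphism, $\varphi(b)$ is one of the involutions $c^jb$. The final normalization step is to compose with the inner automorphism by an appropriate power of $c$ to move $\varphi(b)$ to $b$ exactly while preserving the form $\varphi(c)=c^k$ (conjugation by a power of $c$ fixes $c$). This yields the desired simultaneous normalization $\varphi(b)=b$ and $\varphi(c)=c^k$.

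**The main obstacle.**
The hard part will be justifying that a \emph{single} inner automorphism conjugates both $\varphi(b)$ and $\varphi(c)$ into $\mathbf{H}_0$ at once. A priori each is conjugate into $\mathbf{H}_0$ by a separate element of $\mathbf{H}_2$, and it is not immediate that one conjugator works for both. I expect to resolve this by using that $b$ and $c$ generate the finite group $\mathbf{H}_0$, so $\langle \varphi(b),\varphi(c)\rangle$ is a finite subgroup of $\mathbf{H}_2$; finite subgroups of an HNN-extension are (by Britton's Lemma again, or by the action on the Bass-Serre tree, where finite groups fix a vertex) conjugate into the base. Iterating this observation down through $\mathbf{H}_2 \to \mathbf{H}_1 \to \mathbf{H}_0$ produces one conjugator carrying the whole finite subgroup $\langle\varphi(b),\varphi(c)\rangle$ into $\mathbf{H}_0$, after which the preceding elementary analysis inside the order-$18$ group $\mathbf{H}_0$ finishes the proof.
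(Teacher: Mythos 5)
Your skeleton is essentially the paper's: conjugate the finite subgroup $\varphi(\mathbf{H}_0)$ into $\mathbf{H}_0$ (the paper invokes maximality of $\mathbf{H}_0$ among finite subgroups, which is justified exactly by your Bass--Serre/Britton iteration through $\mathbf{H}_2 \to \mathbf{H}_1 \to \mathbf{H}_0$, so your ``main obstacle'' is resolved the same way), then analyze $\varphi(b), \varphi(c)$ inside the order-$18$ dihedral group. But there is one genuine gap: you never rule out $\varphi(b)=1$. From $b^2=1$ you only get that $\varphi(b)$ has order \emph{dividing} $2$, so the possibilities inside $\mathbf{H}_0$ are $\varphi(b)=1$ or $\varphi(b)=bc^m$; your sentence ``the involutions in $\mathbf{H}_0$ are exactly the elements $c^jb$'' silently discards the trivial case, and no inner automorphism can repair it, since nothing conjugates $1$ to $b$. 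Excluding it is not automatic --- it is precisely where the paper uses surjectivity of $\varphi$ in an essential way: if $\varphi(b)=1$, then since $\varphi(c)$ is a power of $c$, the images of $s$ and $t$ alone would generate $\mathbf{H}_2/\llangle c \rrangle_{\mathbf{H}_2}$, making that quotient $2$-generated; but its further quotient $\mathbf{H}_2/\llangle c,\, s^2,\, t^2,\, t^{-1}btb^{-1} \rrangle_{\mathbf{H}_2} \cong \mathbb{Z}_2 \times \mathbb{Z}_2 \times \mathbb{Z}_2$ needs three generators, a contradiction. Some argument of this kind is indispensable and is absent from your proposal.

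Two smaller remarks. First, the torsion elements of $\mathbf{H}_2$ do not form a subgroup (they lie in the various conjugates of $\mathbf{H}_0$), so ``$\mathbf{H}_0$ is precisely the torsion subgroup'' should be replaced by the statement you actually use, namely that every finite subgroup is conjugate into $\mathbf{H}_0$; similarly $\varphi(c)$ need only have order dividing $9$, not equal to $9$, though your hedged conclusion $\varphi(c)=c^k$ with $k\in\mathbb{Z}$ is unaffected by this. Second, once the gap above is filled, your final normalization is a legitimate variant of the paper's: conjugation by $c^i$ sends $bc^m$ to $bc^{m+2i}$ and fixes $c$, and since $2$ is invertible mod $9$ a suitable $i$ moves $\varphi(b)$ to $b$ exactly; the paper instead composes with the (non-inner) automorphism $b \mapsto bc^{-m}$, $c \mapsto c$, $s \mapsto s$, $t \mapsto t$, whose bijectivity it must verify via an explicit inverse, whereas your inner automorphism comes for free and only perturbs $\varphi(s)$ and $\varphi(t)$ by conjugation, which is harmless for the subsequent propositions.
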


\begin{proof}
Clearly $\varphi(\mathbf{H}_0)$ is a finite subgroup of $\mathbf{H}_2$.
Since $\mathbf{H}_0$ is a maximal finite subgroup of $\mathbf{H}_2$,
$\varphi(\mathbf{H}_0)$ is contained in a conjugate of $\mathbf{H}_0$.
So by replacing $\varphi$ with the composition of $\varphi$ and an appropriate inner automorphism of $\mathbf{H}_2$, we may assume that
$\varphi(\mathbf{H}_0) \subseteq \mathbf{H}_0$,
so that $\varphi(b), \ \varphi(c) \in \mathbf{H}_0$.

First consider $\varphi(c)$.
Note that every element in $\mathbf{H}_0$ can be written as $c^k$ or $bc^k$, where $k \in \mathbb{Z}$.
Since $c^9=1$ in $\mathbf{H}_2$, $\varphi(c)^9=\varphi(c^9)=1$.
But since $(bc^k)^2=1$ for every $k \in \mathbb{Z}$,
we must have $\varphi(c)=c^{k'}$ for some $k' \in \mathbb{Z}$.

Next consider $\varphi(b)$.
Since $b^2=1$ in $\mathbf{H}_2$, $\varphi(b)^2=\varphi(b^2)=1$.
As observed above,
$\varphi(b)=1$ or otherwise $\varphi(b)=bc^m$ in $\mathbf{H}_2$ for some $m \in \mathbb{Z}$.
Assume that $\varphi(b)=1$.
Then since $\varphi$ is onto and $\varphi(c)=c^{k'}$,
	we have
	\[
	\langle \varphi(s), \varphi(t) \rangle/\llangle c \rrangle_{\mathbf{H}_2}=\langle \varphi(b), \varphi(c), \varphi(s), \varphi(t) \rangle/\llangle c \rrangle_{\mathbf{H}_2} = \mathbf{H}_2/\llangle c \rrangle_{\mathbf{H}_2}.
	\]
	This implies that the quotient group $\mathbf{H}_2/\llangle c \rrangle_{\mathbf{H}_2}$ can be generated by two elements.
	But then,
	\[
	\begin{aligned}
		\mathbf{H}_2/\llangle c, \, s^2, \, t^2, \, t^{-1}btb^{-1}\rrangle_{\mathbf{H}_2}
		&\cong \langle \bar{b} \rangle \times \langle \bar{s} \rangle \times \langle \bar{t} \rangle \\
		&\cong \mathbb{Z}_2 \times \mathbb{Z}_2 \times \mathbb{Z}_2
	\end{aligned}
	\]
	could be also generated by two elements, which is a contradiction.
	Hence $\varphi(b) \neq 1$, and thus $\varphi(b)=bc^m$.
	
	Now let $\rho$ be the unique homomorphism from a free group with basis $\{b, c, s, t\}$ to $\mathbf{H}_2$ induced by the mapping
	\[
	\rho: b \mapsto bc^{-m}, \ \ c \mapsto c, \ \ s \mapsto s \ \ \textrm{and} \ \ t \mapsto t.
	\]
	Then every defining relator in presentations~(\ref{equ:H_0_presentation})--(\ref{equ:H_2_presentation}) is sent to the identity element in $\mathbf{H}_2$ by $\rho$.
	So $\rho$ induces an endomorphism $\tilde{\rho}$ of $\mathbf{H}_2$.
	Also, since $m$ is an arbitrary integer, there exists an endomorphism $\tilde{\rho}'$ of $\mathbf{H}_2$ defined by $b \mapsto bc^m$, $c \mapsto c$, $s \mapsto s$ and $t \mapsto t$.
	Then clearly $\tilde{\rho} \circ \tilde{\rho}'=id_{\mathbf{H}_2}$ and $\tilde{\rho}' \circ \tilde{\rho}=id_{\mathbf{H}_2}$. This means that $\tilde{\rho}$ is an automorphism of $\mathbf{H}_2$.
	By replacing further $\varphi$ with the composition $\tilde{\rho} \circ \varphi$,
	we may finally assume that
	$\varphi(b)=b$ and $\varphi(c)=c^{k'}$ with $k' \in \mathbb{Z}$, as desired.
\end{proof}

\begin{proposition}
	\label{prop:phi_s_1}
	Under the assumption that $\varphi(b)=b$ and $\varphi(c)=c^k$ with $k \in \mathbb{Z}$,
	we may further assume that $\varphi(s)=s^{\pm 3^p}$ with $p \in \mathbb{Z}_+ \cup \{0\}$
	and that
	\[
	\varphi(t) \equiv t^{\epsilon_1} w_1 \cdots t^{\epsilon_h} w_h \quad (\textrm{$t$-reduced}),
	\]
	where $h \ge 1$, $\epsilon_i=\pm 1$ and $w_i \in \mathbf{H}_1$ for every $i=1, \dots, h$.
\end{proposition}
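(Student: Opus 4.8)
The plan is to pin down $\varphi(s)$ and $\varphi(t)$ by combining the HNN-structure $\mathbf{H}_2=\langle\mathbf{H}_1,t\svert t^{-1}st=s^3\rangle$, its Bass--Serre tree (Theorem~\ref{thm:Bass_Serre_HNN_extension}), and passage to two tractable quotients. I would first locate $\varphi(s)$ inside $\mathbf{H}_1$. Applying $\varphi$ to $s^{-1}bs=bc^{-3}$ gives $\varphi(s)^{-1}b\,\varphi(s)=bc^{-3k}\in\mathbf{H}_1$. If $\varphi(s)$ had a nontrivial $t$-reduced form $w_0t^{\delta_1}w_1\cdots t^{\delta_m}w_m$ with $m\ge1$, then since the associated subgroups $\langle s\rangle$ and $\langle s^3\rangle$ are torsion-free while the innermost conjugate $w_0^{-1}bw_0$ has order $2$, no pinch is possible; hence $\varphi(s)^{-1}b\,\varphi(s)$ would be $t$-reduced of $t$-length $2m\ge2$ and could not lie in $\mathbf{H}_1$, contradicting Britton's Lemma. (On the tree this is the statement that $\varphi(s)$ is elliptic, being conjugate to $\varphi(s)^3$ and so having translation length $\ell=3\ell=0$.) Writing $\varphi(s)=hs^{j}$ in the normal form of $\mathbf{H}_1=\mathbf{H}_0\rtimes\langle s\rangle$, the relation $s^{-1}cs=c$ forces $h\in\langle c\rangle$, say $h=c^{a}$, and $s^{-1}bs=bc^{-3}$ then yields $2a\equiv3(j-k)\pmod 9$. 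For $\varphi(t)$ I would use the $t$-exponent-sum homomorphism $\theta\colon\mathbf{H}_2\to\ZZ$ ($t\mapsto1$, all other generators $\mapsto0$): since $\theta\circ\varphi$ is onto $\ZZ$ and only $\varphi(t)$ contributes, $\theta(\varphi(t))=\pm1\ne0$, so $\varphi(t)\notin\mathbf{H}_1$ and its $t$-reduced form has $h\ge1$.

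The key step is $j=\pm3^{p}$. Projecting onto $\BS(1,3)=\mathbf{H}_2/\llangle b,c\rrangle_{\mathbf{H}_2}=\langle s,t\svert t^{-1}st=s^{3}\rangle$, the normalization $\varphi(b)=b$, $\varphi(c)=c^{k}$ ensures that $\varphi$ preserves $\llangle b,c\rrangle_{\mathbf{H}_2}$ and descends to a surjective endomorphism $\bar\varphi$ with $\bar\varphi(s)=s^{j}$. As $\BS(1,3)$ is residually finite, it is Hopfian by Mal'cev~\cite{Malcev}, so $\bar\varphi$ is an automorphism; it therefore preserves the characteristic subgroup $N\cong\ZZ[1/3]$ (the kernel of the $t$-exponent-sum map, i.e. of the maximal free-abelian quotient) and restricts to an additive automorphism of $\ZZ[1/3]$ sending $1\mapsto j$. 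Since the units of $\ZZ[1/3]$ are exactly $\pm3^{p}$, this forces $j=\pm3^{p}$.

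The hard part is to eliminate the factor $c^{a}$ and obtain $\varphi(s)=s^{\pm3^{p}}$ exactly. First I would show $3\mid a$: for any homomorphism $\Pi\colon\mathbf{H}_2\to S_3$ the relation $s^{-1}bs=bc^{-3}$ forces $\Pi(s)$ to commute with $\Pi(b)$ (because $\Pi(c)^{3}=1$ in $S_3$), and applying this to $\Pi\circ\varphi$ for a $\Pi$ with $\Pi(b)=(12)$, $\Pi(c)=(123)$ shows that $(123)^{a}$ commutes with $(12)$, whence $(123)^{a}=e$ and $3\mid a$. Consequently $\varphi(s)^{3}=c^{3a}s^{3j}=s^{3j}$, so applying $\varphi$ to $t^{-1}st=s^{3}$ yields $\varphi(t)^{-1}(c^{a}s^{j})\varphi(t)=s^{3j}\in\mathbf{H}_1$; reading this on the Bass--Serre tree (Britton's Lemma), the elliptic element $c^{a}s^{j}$ must be conjugate into the edge group $\langle s\rangle$, which forces $a\equiv0$ or $a\equiv3j\pmod 9$. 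In the first case $\varphi(s)=s^{j}$ already; in the second, post-composing $\varphi$ with conjugation by $b$---which fixes $b$, sends $c\mapsto c^{-1}$ and $s\mapsto c^{3}s$, hence preserves the form of Proposition~\ref{prop:phi_b_c} up to replacing $k$ by $-k$---sends $a\mapsto3j-a\equiv0$ and clears the $c$-power, giving $\varphi(s)=s^{\pm3^{p}}$.

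I expect the delicate point to be exactly this last coordination: simultaneously controlling the $\mathbf{H}_0$-part $c^{a}$ of $\varphi(s)$ through a finite ($S_3$) quotient, through the Britton/tree analysis of the relation $t^{-1}st=s^{3}$, and through a single inner-automorphism adjustment, all while keeping $\varphi(b)=b$ and $\varphi(c)$ a power of $c$. The two Hopfian-quotient inputs ($\BS(1,3)$ for the exponent, $S_3$ for divisibility of $a$) are clean, but matching them with the HNN pinching condition so that conjugation by $b$ really removes the residual $c$-factor is where the bookkeeping must be done carefully.
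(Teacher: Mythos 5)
Your overall route is genuinely different from the paper's, and its core ideas are sound: you deduce $\varphi(s)\in\mathbf{H}_1$ by a Britton pinch on $\varphi(s)^{-1}b\,\varphi(s)$ (the paper instead has $\varphi(\mathbf{H}_0)$ fix two vertices of the Bass--Serre tree and land in a torsion-free edge stabilizer), and you extract the exponent $\pm 3^p$ from Hopficity of $\BS(1,3)$ and the units of $\ZZ[1/3]$ (the paper instead writes $\varphi(s)=s^{\pm 3^pq}$ and shows $q=1$ because $\mathbf{H}_2/\llangle b,c,s^q\rrangle_{\mathbf{H}_2}\cong\ZZ_q\rtimes\ZZ$ cannot be cyclic). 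But there are two genuine gaps. First, your claim that $s^{-1}cs=c$ forces the $\mathbf{H}_0$-part $h$ of $\varphi(s)=hs^j$ into $\langle c\rangle$ fails when $k\equiv 0\pmod 9$: if $h=bc^i$ the relation only yields $c^{-k}=c^{k}$, which is no contradiction when $\varphi(c)=1$, and under the hypotheses of the present proposition the case $\varphi(c)=1$ has not yet been excluded --- the paper rules it out only in Proposition~\ref{prop:phi_c}. This is repairable within your own framework (your central-pinch analysis of $\varphi(t)^{-1}\varphi(s)\varphi(t)=\varphi(s)^3$ shows every $\mathbf{H}_1$-conjugate of $bc^{\ast}s^j$ still has $\mathbf{H}_0$-part of the form $bc^{\ast}$, hence never lies in $\langle s\rangle$ or $\langle s^3\rangle$), but as written the step is unjustified. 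Incidentally, once $h=c^a$ is known, your own congruence $2a\equiv 3(j-k)\pmod 9$ already gives $3\mid a$, so the $S_3$ detour is superfluous.

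Second, and more seriously, the conclusion of the proposition includes the normalization that $\varphi(t)$ has \emph{no} $\mathbf{H}_1$-prefix, i.e.\ $\varphi(t)\equiv t^{\epsilon_1}w_1\cdots t^{\epsilon_h}w_h$; your exponent-sum argument only yields $\varphi(t)\notin\mathbf{H}_1$, i.e.\ a $t$-reduced form $w_0t^{\epsilon_1}w_1\cdots t^{\epsilon_h}w_h$ with possibly $w_0\neq 1$, and you never remove $w_0$. In your ordering of steps this is not mere bookkeeping: stripping $w_0$ afterwards by the inner automorphism given by $w_0^{-1}$ can destroy what you have already achieved --- if the $\mathbf{H}_0$-part of $w_0$ involves $b$, conjugation sends $s\mapsto c^3s$, so for $p=0$ it moves $\varphi(s)=s^{\pm 1}$ to $c^{\pm 3}s^{\pm 1}\notin\langle s\rangle$, and it also turns $\varphi(b)=b$ into $bc^m$ --- forcing you to redo the $\tilde{\rho}$-adjustment of Proposition~\ref{prop:phi_b_c} and your $c$-clearing step, and to check this terminates. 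The paper avoids the trap by stripping $w_0$ \emph{before} pinning down $\varphi(s)$: after the inner automorphism and $\tilde{\rho}$, $\varphi(t)$ begins with $t^{\pm 1}$, so the first edge of the geodesic $[v,\varphi(t)v]$ has stabilizer exactly $\langle s\rangle$ or $t^{-1}\langle s\rangle t=\langle s^3\rangle$, and $\varphi(s)\in\langle s\rangle$ comes out directly, with no residual $c^a$ to clear at all. To complete your proof you must add this normalization and verify that $\varphi(s)=s^{\pm 3^p}$ survives it (or perform it first, which essentially collapses your ``hard part'' into the paper's argument).
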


\begin{proof}
Let $\Gamma$ be the Bass-Serre tree associated to $\mathbf{H}_2$
viewed as an HNN-extension of $\mathbf{H}_1$ (see Theorem~\ref{thm:Bass_Serre_HNN_extension}).
Denote by $v$ the vertex labeled as the coset $\mathbf{H}_1$.
Since $\varphi(\mathbf{H}_0) \subseteq \mathbf{H}_0 \subseteq \mathbf{H}_1$,
clearly $\varphi(\mathbf{H}_0)$ fixes $v$.
We shall show that $\varphi(s)$ fixes $v$ as well.
Assume on the contrary that $\varphi(s)v \neq v$.
Clearly,
$\varphi(s)\varphi(\mathbf{H}_0)\varphi(s)^{-1}$ fixes $\varphi(s)v$.
It also fixes $v$, since
\[
\varphi(s)\varphi(\mathbf{H}_0)\varphi(s)^{-1}=\varphi(s \mathbf{H}_0 s^{-1})=\varphi(\mathbf{H}_0) \subseteq \mathbf{H}_1.
\]
Since $\varphi(s)v \neq v$, and since $T$ is a tree, $\varphi(s)\varphi(\mathbf{H}_0)\varphi(s)^{-1}$
fixes an edge in $\Gamma$, and thus it is a subgroup of some edge stabilizer of $\Gamma$.
But since every edge stabilizer of $\Gamma$ is conjugated to $\langle s \rangle$ which is an infinite cyclic group, and since $\varphi(s)\varphi(\mathbf{H}_0)\varphi(s)^{-1}$ is finite,
the only possibility is that $\varphi(\mathbf{H}_0)=\{1\}$,
which is impossible.
Therefore, $\varphi(s)$ fixes $v$, that is, $\varphi(s) \in \mathbf{H}_1$.

Now consider $\varphi(t)$.
Since $\varphi$ is onto, $\varphi(t) \notin \mathbf{H}_1$.
This yields that $\varphi(t)v \neq v$.
Since $st=ts^3$ in $\mathbf{H}_2$,
$\varphi(s)\varphi(t)=\varphi(t)\varphi(s)^3$, and hence
\[
\varphi(s)\varphi(t)v=\varphi(t)\varphi(s)^3v=\varphi(t)v.
\]
This means that $\varphi(s)$ fixes $\varphi(t)v$.
Since $\varphi(t)v \neq v$, $\varphi(s)$ fixes an edge in $\Gamma$.
In particular, $\varphi(s)$ fixes every edge on the geodesic $[v, \varphi(t)v]$ in $\Gamma$.

At this point, write $\varphi(t)$ as
	\[
	\varphi(t) \equiv w_0 t^{\epsilon_1} w_1 \cdots t^{\epsilon_h} w_h \quad \textrm{($t$-reduced)},
	\]
	where $h \ge 1$, $\epsilon_i=\pm 1$ for every $i=1, \dots, h$,
	and $w_j \in \mathbf{H}_1$ for every $j=0, \dots, h$.
	Thus by replacing $\varphi$ with the composition of $\varphi$ and the inner automorphism of $\mathbf{H}_2$ given by ${w_0}^{-1}$,
	we may assume that
	$\varphi(b)=bc^m$, $\varphi(c)=c^{k'}$, $\varphi(s) \in \mathbf{H}_1$ and
	\[
	\varphi(t) \equiv t^{\epsilon_1} w_1 \cdots t^{\epsilon_h}{w_h}' \quad \textrm{($t$-reduced)},
	\]
	where $h \ge 1$, $\epsilon_i=\pm 1$ for every $i=1, \dots, h$,
	and $w_h'$, $w_j \in \mathbf{H}_1$ for every $j=1, \dots, h-1$.
	Then by replacing further $\varphi$ with the composition $\tilde{\rho} \circ \varphi$,
	where $\tilde{\rho}$ is the automorphism of $\mathbf{H}_2$ (see the proof of Proposition~\ref{prop:phi_b_c}) defined by
	\[
	\tilde{\rho}: b \mapsto bc^{-m}, \ \ c \mapsto c, \ \ s \mapsto s \ \ \textrm{and} \ \ t \mapsto t,
	\]
	we may further assume that
	$\varphi(b)=b$ and $\varphi(c)=c^{k'}$, $\varphi(s) \in \mathbf{H}_1$ and $\varphi(t)$ as above.

Now recall that $\varphi(s)$ fixes every edge on the geodesic $[v, \varphi(t)v]$ in $\Gamma$.
In particular, $\varphi(s)$ belongs to the stabilizer of the first edge
on the geodesic $[v, \varphi(t)v]$.
Since $\varphi(t)$ begins with the letter $t$ or $t^{-1}$,
the first edge on the geodesic $[v, \varphi(t)v]$ is labeled as $\langle s \rangle$
or as $t^{-1} \langle s \rangle$.
So the stabilizer of the former is $\langle s \rangle$
and that of the latter is $t^{-1}\langle s \rangle t=\langle s^3 \rangle$.
In either case, $\varphi(s) \in \langle s \rangle$, that is, $\varphi(s)=s^r$ for some $r \in \mathbb{Z}$.

Write $r=\pm 3^p q$, where $p, q \in \mathbb{Z}_+ \cup \{0\}$
	and $\gcd (3,q)=1$.
	We shall show that $q=1$.
	Clearly, $s^r=s^{\pm 3^p q} \in \llangle s^q \rrangle_{\mathbf{H}_2}$.
	Suppose that $q \neq 1$, namely $q \ge 2$.
	Since $\varphi$ is onto and since $\varphi(b)=b$ and $\varphi(c)=c^k$, we have
	\[
	\langle \varphi(t) \rangle / \llangle b, c, s^q \rrangle_{\mathbf{H}_2}
	=\langle \varphi(b), \varphi(c), \varphi(s), \varphi(t) \rangle / \llangle b, c, s^q \rrangle_{\mathbf{H}_2}
	=\mathbf{H}_2 / \llangle b, c, s^q \rrangle_{\mathbf{H}_2}.
	\]
	This implies that the quotient group $\mathbf{H}_2 / \llangle b, c, s^q \rrangle_{\mathbf{H}_2}$ can be generated by only one element.
	But this is a contradiction, since
	\[
	\mathbf{H}_2 / \llangle b, c, s^q \rrangle_{\mathbf{H}_2} \cong \langle \bar{s} \rangle \times \langle \bar{t} \rangle
	\cong \mathbb{Z}_q \rtimes \mathbb{Z}.
	\]
	Hence $q=1$, that is, $\varphi(s)=s^{\pm 3^p}$, as desired.
\end{proof}

\begin{proposition}
	\label{prop:phi_c}
	Under the assumption that $\varphi(b)=b$, $\varphi(c)=c^k$,
	$\varphi(s)=s^{\pm 3^p}$ and
	$\varphi(t)=t^{\epsilon_1} w_1 \cdots t^{\epsilon_h} w_h$ $(t$-reduced$)$,
	where $k, p \in \mathbb{Z}$ with $p \ge 0$, $h \ge 1$,
	$\epsilon_i=\pm 1$, and where $w_i \in \mathbf{H}_1$
	for every $i=1, \dots, h$,
	we may further assume that $\varphi(c)=c$ and $\varphi(s)=s$.
\end{proposition}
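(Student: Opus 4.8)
The plan is to pin down the integers $p$ and $k$ (and the sign in $\varphi(s)=s^{\pm 3^p}$) from the defining relations together with surjectivity, and then to remove the remaining ambiguity by composing $\varphi$ with two explicit automorphisms of $\mathbf{H}_2$. Throughout, write $\varphi(s)=s^{\epsilon 3^p}$ with $\epsilon=\pm 1$.

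First I would read off a congruence from the relation $s^{-1}bs=bc^{-3}$. Since $s^{-n}bs^{n}=bc^{-3n}$ for all $n$ (a consequence of $s^{-1}bs=bc^{-3}$ and $[s,c]=1$), applying $\varphi$ gives $bc^{-\epsilon 3^{p+1}}=bc^{-3k}$, i.e. $\epsilon 3^{p+1}\equiv 3k \pmod 9$, which is equivalent to $\epsilon 3^{p}\equiv k \pmod 3$. In particular $p\ge 1$ would force $3\mid k$, so it suffices to prove $3\nmid k$; this is the step I expect to be the main obstacle, since it cannot be detected in any finite quotient (by the Remark every finite quotient of $\mathbf{H}_2$ kills $c^3$, hence kills $c^k$ whenever $3\mid k$).

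To circumvent this I would pass to the quotient $\overline{\mathbf{H}}:=\mathbf{H}_2/\llangle s\rrangle_{\mathbf{H}_2}$. Killing $s$ collapses $s^{-1}bs=bc^{-3}$ to $c^{3}=1$ and trivialises the $t$-relation, so $\overline{\mathbf{H}}\cong S_3 \ast \mathbb{Z}$ with $S_3=\langle b,c \svert b^2, c^3, b^{-1}cb=c^{-1}\rangle$ and $\mathbb{Z}=\langle t\rangle$. Because $\varphi(s)=s^{\epsilon 3^p}\in\llangle s\rrangle_{\mathbf{H}_2}$, the endomorphism $\varphi$ descends to a surjective endomorphism $\overline{\varphi}$ of $\overline{\mathbf{H}}$ with $\overline{\varphi}(s)=1$ and $\overline{\varphi}(c)=c^{k}$. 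If $3\mid k$ then $\overline{\varphi}(c)=1$, so $\overline{\mathbf{H}}=\overline{\varphi}(\overline{\mathbf{H}})=\langle b,\overline{\varphi}(t)\rangle$ would be generated by two elements; but $\rank(\overline{\mathbf{H}})=\rank(S_3)+\rank(\mathbb{Z})=3$ by Grushko's theorem, a contradiction. Hence $3\nmid k$, and the congruence above then forces $p=0$, so that $\varphi(s)=s^{\epsilon}$ and $k\equiv\epsilon\pmod 3$.

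It remains to arrange $\epsilon=+1$ and $k\equiv 1\pmod 9$ without disturbing $\varphi(b)=b$ or the $t$-reduced shape of $\varphi(t)$. If $\epsilon=-1$ I would compose $\varphi$ with the involution $\nu\colon b\mapsto b,\ c\mapsto c^{-1},\ s\mapsto s^{-1},\ t\mapsto t$ (a routine relation check, the key identity being $sbs^{-1}=bc^{3}=\nu(bc^{-3})$); this turns $\varphi(s)$ into $s$ and replaces $k$ by $-k$, leaving $k\equiv 1\pmod 3$. Since $3\nmid k$, the class of $k$ modulo $9$ lies in the order-$3$ multiplicative subgroup $\{1,4,7\}$, and for each such $j$ the assignment $c\mapsto c^{j}$ (fixing $b,s,t$) is an automorphism $\sigma_j$ of $\mathbf{H}_2$, precisely because $j\equiv 1\pmod 3$. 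Composing with the appropriate $\sigma_j$ sends $\varphi(c)=c^{k}$ to $c$ while fixing $\varphi(b)=b$ and $\varphi(s)=s$; as both $\nu$ and $\sigma_j$ fix $t$ and preserve $\mathbf{H}_1$, the $t$-reduced form of $\varphi(t)$ is retained, completing the normalisation.
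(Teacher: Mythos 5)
Your proof is correct and follows essentially the same route as the paper: both rule out $3 \mid k$ by passing to a free-product quotient of $\mathbf{H}_2$ and counting generators of the image under the surjection (you kill $\llangle s \rrangle_{\mathbf{H}_2}$ and invoke Grushko on $S_3 \ast \mathbb{Z}$, while the paper kills $\llangle b, c^3, s \rrangle_{\mathbf{H}_2}$ and observes that $\mathbb{Z}_3 \ast \mathbb{Z}$ is not cyclic), and both then normalise by composing $\varphi$ with the automorphisms $c \mapsto c^{k'}$ (with $kk' \equiv 1 \pmod 9$), possibly combined with $s \mapsto s^{-1}$, and exploit the relation $s^{-1}bs = bc^{-3}$ to pin down $\varphi(s)$. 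The only organisational difference is that you extract the congruence $\epsilon 3^{p} \equiv k \pmod 3$ up front, so that $p=0$ falls out immediately once $3 \nmid k$, whereas the paper eliminates $p \ge 1$ and the wrong sign after normalising $\varphi(c)=c$ --- a cosmetic reordering of the same ingredients.
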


\begin{proof}
	We shall first show that $\varphi(c)=c^k$ with $k \equiv \pm 1 \pmod 3$.
	Suppose the contrary.
	Then $\varphi(c)=1$ or $\varphi(c)=c^{\pm 3}$, because $c^9=1$.
	Since $\varphi$ is onto,
	we have
	\[
	\langle \varphi(t) \rangle/\llangle b, c^3, s \rrangle_{\mathbf{H}_2}
	=\langle \varphi(b), \varphi(c), \varphi(s), \varphi(t) \rangle/\llangle b, c^3, s \rrangle_{\mathbf{H}_2}
	= \mathbf{H}_2/\llangle b, c^3, s \rrangle_{\mathbf{H}_2}.
	\]
	This implies that the quotient group $\mathbf{H}_2/\llangle b, c^3, s \rrangle_{\mathbf{H}_2}$ can be generated by only one element.
	But since
	\begin{equation}
		\label{equ:three_generators}
		\begin{aligned}
			\mathbf{H}_2/\llangle b, c^3, \, s \rrangle_{\mathbf{H}_2}
			\cong \langle \bar{c} \rangle \ast \langle \bar{t} \rangle
			\cong \mathbb{Z}_3  \ast \mathbb{Z},
		\end{aligned}
	\end{equation}
	we reach a contradiction.
	
	Thus $\varphi(c)=c^k$ with $k \equiv \pm 1 \pmod 3$.
	Let $k'$ be an integer such that $kk' \equiv 1 \ \pmod 9$.
	For such $k'$, clearly either $c^{-3k'}=c^{-3}$ or $c^{-3k'}=c^3$.
	Moreover, $c^{-3k'}=c^{-3k}$.
	First assume that $c^{-3k'}=c^{-3k}=c^{-3}$.
	Let $\rho$ and $\rho'$ be the unique homomorphisms from a free group with basis $\{b, c, s, t\}$ to $\mathbf{H}_2$ induced by the mappings
	\[
	\begin{aligned}
		\rho&: b \mapsto b, \ \ c \mapsto c^{k'}, \ \ s \mapsto s, \ \ t \mapsto t, \ \ \text{and} \\
		\rho'&: b \mapsto b, \ \ c \mapsto c^k, \ \ s \mapsto s, \ \  t \mapsto t.
	\end{aligned}
	\]
	Then every defining relator in presentations~(\ref{equ:H_0_presentation})--(\ref{equ:H_2_presentation}) is sent to the identity element in $\mathbf{H}_2$ by both $\rho$ and $\rho'$.
	So there are endomorphisms $\tilde{\rho}$ and $\tilde{\rho}'$  of $\mathbf{H}_2$ induced by $\rho$ and $\rho'$, respectively.
	For such $\tilde{\rho}$ and $\tilde{\rho}'$, clearly $\tilde{\rho} \circ \tilde{\rho}'=id_{\mathbf{H}_2}$ and $\tilde{\rho}' \circ \tilde{\rho}=id_{\mathbf{H}_2}$, meaning that $\tilde{\rho}$ is an automorphism of $\mathbf{H}_2$.
	By replacing $\varphi$ with $\varphi \circ \tilde{\rho}$,
	we may assume that
	$\varphi(b)=b$, $\varphi(c)=c$ and $\varphi(s)=s^{\pm 3^p}$.
	
	Next assume that $c^{-3k'}=c^{-3k}=c^3$.
	Let $\tau$ and $\tau'$ be the unique homomorphisms from a free group with basis $\{b, c, s, t\}$ to $\mathbf{H}_2$ induced by the mappings
	\[
	\begin{aligned}
		\tau&: b \mapsto b, \ \ c \mapsto c^{k'}, \ \ s \mapsto s^{-1}, \ \ t \mapsto t, \ \ \text{and} \\
		\tau'&: b \mapsto b, \ \ c \mapsto c^k, \ \ s \mapsto s^{-1}, \ \  t \mapsto t.
	\end{aligned}
	\]
	Again since every defining relator in presentations~(\ref{equ:H_0_presentation})--(\ref{equ:H_2_presentation}) is sent to
	the identity element in $\mathbf{H}_2$ by both $\tau$ and $\tau'$, there are endomorphisms $\tilde{\tau}$ and $\tilde{\tau'}$ of $\mathbf{H}_2$ induced by $\tau$ and $\tau'$, respectively.
	For such $\tilde{\tau}$ and $\tilde{\tau}'$, clearly $\tilde{\tau} \circ \tilde{\tau}'=id_{\mathbf{H}_2}$ and $\tilde{\tau}' \circ \tilde{\tau}=id_{\mathbf{H}_2}$, meaning that $\tilde{\tau}$ is an automorphism of $\mathbf{H}_2$.
	By replacing $\varphi$ with $\varphi \circ \tilde{\tau}$, we may also assume that
	$\varphi(b)=b$, $\varphi(c)=c$ and $\varphi(s)=s^{\pm 3^{p}}$.

Now consider $\varphi(s)$.
	From the defining relation $s^{-1}bs=bc^{-3}$ in presentation (\ref{equ:H_1_presentation}),
	it follows that
	$\varphi(s)^{-1}\varphi(b)\varphi(s)=\varphi(b)\varphi(c)^{-3}$.
	Here, since $\varphi(b)=b$, $\varphi(c)=c$ and $\varphi(s)=s^{\pm 3^p}$,
	we get
	\begin{equation}
		\label{equ:equ_s}
		s^{\mp 3^p} b s^{\pm 3^p}=bc^{-3}.
	\end{equation}
	But from the defining relations in presentation (\ref{equ:H_1_presentation}),
we obtain that $s b s^{-1}=bc^3$ and that
$s^{\mp 3^i} b s^{\pm 3^i}=b$ for any $i \ge 1$.
Combining these with (\ref{equ:equ_s})
yields $c^6=1$ or $c^3=1$, contrary to the fact that $c$ has order $9$.
Therefore,
the only possibility to avoid a contradiction is that $\varphi(s)=s$, as desired.
\end{proof}

\begin{proposition}
	\label{prop:phi_t_2}
	Under the assumption that $\varphi(b)=b$, $\varphi(c)=c$,
	$\varphi(s)=s$
	and $\varphi(t)=t^{\epsilon_1} w_1 \cdots t^{\epsilon_h} w_h$ $(t$-reduced$)$,
	where $\epsilon_i=\pm 1$ and $w_i \in \mathbf{H}_1$
	for every $i=1, \dots, h$,
	we may further assume that
	$\varphi(t)=t^{\epsilon_1} w_1 \cdots w_{h-1} t^{\epsilon_h}$.
\end{proposition}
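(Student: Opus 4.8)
The plan is to absorb the trailing factor $w_h$ by replacing $\varphi$ with a composition $\varphi\circ\theta$, where $\theta$ is a carefully chosen automorphism of $\mathbf{H}_2$ fixing $b$, $c$ and $s$, so that the normalizations already secured in Propositions~\ref{prop:phi_b_c}--\ref{prop:phi_c} are left undisturbed. An arbitrary inner automorphism will not do, precisely because conjugation moves $b$, $c$, $s$ off their fixed values; instead I would build a family of automorphisms adapted to the stable-letter relation $t^{-1}st=s^3$.

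The key structural fact I would establish first is that $s^3$ is central in $\mathbf{H}_1$. Indeed, $s$ commutes with $c$ by the relation $s^{-1}cs=c$, hence so does $s^3$; and a direct computation starting from $s^{-1}bs=bc^{-3}$ and using $c^9=1$ gives
\[
s^{-3}bs^{3}=bc^{-9}=b ,
\]
so $s^3$ commutes with $b$ as well. Since $b$, $c$, $s$ generate $\mathbf{H}_1$, this yields $s^{3}\in Z(\mathbf{H}_1)$.

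Granting this, for each $g\in\mathbf{H}_1$ I would define $\theta_g$ on the generators by $b\mapsto b$, $c\mapsto c$, $s\mapsto s$ and $t\mapsto tg$. The only defining relation of $\mathbf{H}_2$ whose preservation is not immediate is $t^{-1}st=s^3$, and here
\[
(tg)^{-1}s(tg)=g^{-1}(t^{-1}st)g=g^{-1}s^{3}g=s^{3}
\]
by the centrality of $s^3$, so $\theta_g$ is a well-defined endomorphism. It is in fact an automorphism with inverse $\theta_{g^{-1}}$: since $\theta_g$ fixes the subgroup $\mathbf{H}_1=\langle b,c,s\rangle$ pointwise, one checks directly that $\theta_g\circ\theta_{g^{-1}}$ and $\theta_{g^{-1}}\circ\theta_g$ both act as the identity on $b$, $c$, $s$ and send $t$ to $t$. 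Exhibiting this explicit two-sided inverse avoids any circular appeal to Hopfianness of $\mathbf{H}_2$.

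Finally, I would replace $\varphi$ by $\varphi\circ\theta_{w_h^{-1}}$. Because $w_h^{-1}\in\mathbf{H}_1$ and $\varphi$ fixes $b$, $c$, $s$, hence fixes $\mathbf{H}_1$ pointwise, we obtain
\[
(\varphi\circ\theta_{w_h^{-1}})(t)=\varphi(t\,w_h^{-1})=\varphi(t)\,w_h^{-1}=t^{\epsilon_1}w_1\cdots t^{\epsilon_h},
\]
while $b$, $c$, $s$ remain fixed; moreover $\varphi\circ\theta_{w_h^{-1}}$ is still surjective since $\theta_{w_h^{-1}}$ is bijective, and injectivity of the new map forces injectivity of $\varphi$, which legitimizes the phrase ``we may further assume''. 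This is exactly the asserted normal form, now with $w_h=1$. The essential point of the whole argument is the centrality of $s^3$ in $\mathbf{H}_1$, which is what makes the family $\theta_g$ available; once that short computation is in place, the remaining verifications are routine and I anticipate no substantial obstacle.
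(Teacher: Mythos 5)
Your proof is correct and takes essentially the same route as the paper: the paper likewise composes $\varphi$ with the automorphism $b \mapsto b$, $c \mapsto c$, $s \mapsto s$, $t \mapsto t w_h^{-1}$ and exhibits $t \mapsto t w_h$ as its explicit two-sided inverse, the well-definedness check resting on exactly the identity $w_h s^3 w_h^{-1} = s^3$ in $\mathbf{H}_2$. Your preliminary verification that $s^3$ is central in $\mathbf{H}_1$ simply makes explicit what the paper uses implicitly in that computation.
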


\begin{proof}
	Define the unique homomorphism $\rho$ from a free group with basis $\{b, c, s, t\}$ to $\mathbf{H}_2$ induced by the mapping
	\[
	\rho: b \mapsto b, \ \ c \mapsto c, \ \ s \mapsto s \ \ \textrm{and} \ \ t \mapsto t w_h^{-1}.
	\]
	Then every defining relator in prsentation~(\ref{equ:H_0_presentation})--(\ref{equ:H_2_presentation}) is sent to the identity element in $\mathbf{H}_2$ by $\rho$, since $\rho(t)^{-1}s\rho(t)s^{-3} \equiv w_ht^{-1}stw_h^{-1}s^{-3} \stackrel{\mathbf{H}_2}{=} w_hs^3w_h^{-1}s^{-3} \stackrel{\mathbf{H}_2}{=} 1$. So $\rho$ induces an endomorphism $\tilde{\rho}$ of $\mathbf{H}_2$.
	For the same reason, there exists the endomorphism $\tilde{\rho}'$ of $\mathbf{H}_2$ defined by
	$b \mapsto b$, $c \mapsto c$, $s \mapsto s$ and $t \mapsto tw_h$.
	Then clearly $\tilde{\rho} \circ \tilde{\rho}'=id_{\mathbf{H}_2}$ and $\tilde{\rho}' \circ \tilde{\rho}=id_{\mathbf{H}_2}$. Therefore, $\tilde{\rho}$ is an automorphism of $\mathbf{H}_2$.
	By replacing $\varphi$ with the composition $\varphi \circ \tilde{\rho}$,
	we may assume that $\varphi(b)=b$, $\varphi(c)=c$ and
	$\varphi(s)=s$, and that $\varphi(t)=t^{\epsilon_1} w_1 \cdots w_{h-1} t^{\epsilon_h}$, as desired.
\end{proof}

\begin{proposition}
	\label{prop:phi_t_3}
	Under the assumption that $\varphi(b)=b$, $\varphi(c)=c$,
	$\varphi(s)=s$
	and $\varphi(t)=t^{\epsilon_1} w_1 \cdots w_{h-1} t^{\epsilon_h}$ $(t$-reduced$)$,
	where $\epsilon_i=\pm 1$ for every $i=1, \dots, h$,
	and $w_j \in \mathbf{H}_1$ for every $j=1, \dots, h-1$,
	we may finally assume that
	$\varphi(t)=t$.
\end{proposition}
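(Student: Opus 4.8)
The plan is to exploit that, under the running assumptions, $\varphi$ restricts to the identity on $\mathbf{H}_1$. Indeed, $b,c,s$ generate $\mathbf{H}_1$ and $\varphi$ fixes each of them, so $\varphi|_{\mathbf{H}_1}=\mathrm{id}$; applying $\varphi$ to the defining relation $t^{-1}st=s^3$ then yields the single constraint $\varphi(t)^{-1}s\varphi(t)=s^3$. Thus it remains to show: if $P:=\varphi(t)=t^{\epsilon_1}w_1\cdots w_{h-1}t^{\epsilon_h}$ is $t$-reduced, satisfies $P^{-1}sP=s^3$, and $\langle\mathbf{H}_1,P\rangle=\mathbf{H}_2$ (surjectivity), then $P=t$. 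Two structural facts will be used repeatedly: $\mathbf{H}_1\cong\mathbf{H}_0\rtimes\langle s\rangle$ with $s^3$ central in $\mathbf{H}_1$ (since conjugation by $s$ has order $3$ on $\mathbf{H}_0$), and the Bass-Serre tree $\Gamma$ of $\mathbf{H}_2$ over $\mathbf{H}_1$, on which $\mathbf{H}_1=\mathrm{Stab}(v)$ for the base vertex $v$ and every edge stabilizer is a conjugate of $\langle s\rangle$.

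First I would locate the image of $v$. Since $s$ fixes $v$ and $\varphi(t)^{-1}s\varphi(t)=s^3$ also fixes $v$, the element $s$ fixes $Pv$ as well, hence fixes the whole geodesic $[v,Pv]$, whose combinatorial length equals the reduced $t$-length $h$ of $P$. Inspecting the first edge of this geodesic: if $\epsilon_1=-1$ its stabilizer is $t^{-1}\langle s\rangle t=\langle s^3\rangle$, and $s\notin\langle s^3\rangle$ (the exponent of $s$ is $1\notin 3\mathbb{Z}$), a contradiction; so $\epsilon_1=+1$. This pins down the start of $P$, but because $s^3$ is central and $s$ therefore fixes a large subtree, the intermediate letters are not yet constrained — the reduction of $P^{-1}sP$ to $s^3$ can be arranged for many words with $h\ge 2$ (e.g. $P=twtw^{-1}t^{-1}$). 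Hence the equation alone does not force $P=t$, and surjectivity must be brought in.

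The heart of the argument, and the step I expect to be the main obstacle, is to deduce $h=1$ from surjectivity. Because $\mathrm{Stab}_{\mathbf{H}_2}(v)=\mathbf{H}_1\subseteq N:=\langle\mathbf{H}_1,P\rangle$, one has $N=\mathbf{H}_2$ if and only if $N$ acts transitively on the vertices of $\Gamma$, which in turn holds if and only if $t\in N$. I would therefore assume $h\ge 2$ and show $t\notin N$, i.e. that the orbit $Nv$ contains no vertex adjacent to $v$. The tool is Bass-Serre theory applied to the action of $N$ on $\Gamma$: $N$ is generated by the vertex group $\mathbf{H}_1$ (fixing $v$) together with $P$, which moves $v$ a distance $h\ge 2$ along the explicit geodesic determined by its $t$-reduced form. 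Using $P^{-1}sP=s^3$, the centrality of $s^3$, and the reducedness of $P$, I would control how the distance $d(v,gv)$ can shrink as $g$ ranges over products of $\mathbf{H}_1$-elements and powers of $P$, and show that no such $g$ realizes $d(v,gv)=1$; equivalently, folding the graph of groups built from $\mathbf{H}_1$ and the length-$h$ path $[v,Pv]$ never collapses it to the single loop presenting $\mathbf{H}_2$. This is exactly where the example $P=twtw^{-1}t^{-1}$, which survives every finite quotient and every quotient onto $\mathrm{BS}(1,3)$ or $S_3\ast\mathbb{Z}$ because $P\sim t$, must be excluded: the obstruction is the genuinely infinite-index subgroup $\langle\mathbf{H}_1,P\rangle\cong\mathbf{H}_2$, visible only on $\Gamma$. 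The careful orbit/folding bookkeeping is the technical crux.

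Granting $h=1$, we have $P=t^{\epsilon_1}$, and the relation $t^{-\epsilon_1}st^{\epsilon_1}=s^3$ forces $\epsilon_1=+1$ (for $\epsilon_1=-1$ the left side $tst^{-1}$ is not even in $\mathbf{H}_1$). Therefore $\varphi(t)=t$, and combined with $\varphi(b)=b$, $\varphi(c)=c$, $\varphi(s)=s$ this gives $\varphi=\mathrm{id}$, which is injective. Since every reduction carried out in the preceding propositions was composition with an automorphism of $\mathbf{H}_2$, the original surjective endomorphism was injective as well; hence $\mathbf{H}_2$ is Hopfian, completing the proof.
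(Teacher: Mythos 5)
Your reduction of the problem and your determination of $\epsilon_1=+1$ are correct, and in fact parallel the paper: the paper derives $\epsilon_1=1$ by applying Britton's Lemma to the cyclically non-reduced expression $\varphi(t)^{-1}s\varphi(t)s^{-3}=1$, which is the algebraic shadow of your first-edge-stabilizer computation on the Bass-Serre tree. Your example $P=twtw^{-1}t^{-1}$ (with $w\notin\langle s^3\rangle$, using centrality of $s^3$ in $\mathbf{H}_1$) correctly shows that the relation alone cannot force $h=1$ and that surjectivity is indispensable. But the proposal has a genuine gap at what you yourself call its heart: you never prove that surjectivity forces $h=1$. The sentences ``I would control how the distance $d(v,gv)$ can shrink\dots'' and ``the careful orbit/folding bookkeeping is the technical crux'' announce the needed lemma without supplying any argument for it. The difficulty is real: because $s^3$ is central in $\mathbf{H}_1$, products of blocks can collapse drastically on the tree --- for your own $P$ one has $Ps^3P^{-1}=s$, so a word with two $P$-blocks and naive displacement $6$ acts with displacement $0$. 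Any complete proof must explain why such collapses can never produce $t$ itself, and nothing in your text does so.

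The paper fills exactly this hole with a concrete Britton's-Lemma count, which is what your ``folding bookkeeping'' would have to amount to. Write $t=z_0\varphi(t)^{\delta_1}z_1\cdots\varphi(t)^{\delta_\ell}z_\ell$ with $z(b,c,s,t)$ $t$-reduced and normalized so that $z_i\notin\langle s\rangle$ whenever $\delta_i=-1$ or $\delta_{i+1}=1$ (this normalization, available because $s^mt=ts^{3m}$, is what filters out the collapse above: $ts^3t^{-1}$ is itself a pinch, so it never appears in a reduced $z$). Then one checks each junction $\varphi(t)^{\delta_i}z_i\varphi(t)^{\delta_{i+1}}$: in the sign patterns $(-1,1)$, $(1,1)$, $(-1,-1)$ no cancellation can begin, because $\epsilon_1=1$ and $z_i\notin\langle s\rangle$; in the pattern $(1,-1)$ cancellations may occur, but a complete cancellation would force the last pinch to have the form $tz_i't^{-1}$ with $z_i'\in\langle s^3\rangle$, making $\varphi(t)z_i\varphi(t)^{-1}$ a power of $s$, whence $z_i\in\langle s^3\rangle$ by the relation $\varphi(t)^{-1}s\varphi(t)=s^3$, contradicting reducedness. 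Hence at least one stable letter of each block survives, so the element $t$ (of reduced $t$-length one) forces $\ell=1$, and Britton's Lemma applied to $t=z_0\varphi(t)^{\delta_1}z_1$ then forces $\delta_1=1$, $h=1$ and $\varphi(t)=t$. In your tree language this count is precisely the assertion that a suitably normalized product of $\ell$ blocks has displacement at least $\ell$; some version of this pinch analysis must be written out before your conclusion, and once it is, your argument becomes the paper's proof in geometric clothing. Your closing paragraph (from $h=1$ to Hopficity) is fine as it stands.
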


\begin{proof}
	From the defining relation $t^{-1}st=s^3$ in presentation (\ref{equ:H_2_presentation})
	together with the hypothesis,
	it follows that
	\begin{equation}
		\label{equ:equ_t_3}
		(t^{-\epsilon_h} {w_{h-1}}^{-1} \cdots {w_1}^{-1} t^{-\epsilon_1}) s (t^{\epsilon_1} w_1 \cdots w_{h-1} t^{\epsilon_h}) s^{-3}=1.
	\end{equation}
	By Britton's Lemma, this expression is not $t$-cyclically reduced.
	
	Assume that $\epsilon_1=-1$.
	Then the part $t^{-\epsilon_1} s t^{\epsilon_1}$ is already $t$-reduced.
	So the only $t$-reductions can be made successively starting from the part $t^{\epsilon_h} s^{-3} t^{-\epsilon_h}$.
	Assume that $t$-reductions can be made only until
	$t^{\epsilon_j} (w_j \cdots (t^{\epsilon_h} s^{-3} t^{-\epsilon_h}) \cdots w_j^{-1}) t^{-\epsilon_j}$.
	Here, if $j \ge 2$, then by Britton's Lemma, equality (\ref{equ:equ_t_3}) cannot hold, a contradiction.
	Also, if $j=1$, then after making all $t$-reductions,
	$t^{\epsilon_1} (w_1 \cdots (t^{\epsilon_h} s^{-3} t^{-\epsilon_h}) \cdots w_1^{-1}) t^{-\epsilon_1}$
	becomes $s^k$  for some $k \in \mathbb{Z}$.
	In particular, since $\epsilon_1=-1$, $k$ is a multiple of $3$.
	But then $s^{k+1}=1$ from equality (\ref{equ:equ_t_3}),
	contrary to the fact that $s$ is an element of infinite order.
	
	Thus $\epsilon_1=1$.
	On the other hand, since $\varphi$ is onto,
	there is a reduced word $z(b, c, s, t)$ in $\{b, c, s, t\}$
	such that
	\begin{equation}
		\label{equ:equ_t}
		t \stackrel{\mathbf{H}_2}{=}z(b, c, s, \varphi(t)).
	\end{equation}
	We may write $z(b, c, s, t)$ as follows:
	\[
	z(b, c, s, t) \equiv z_0 t^{\delta_1} z_1 \cdots  z_{\ell-1} t^{\delta_{\ell}} z_{\ell} \quad (\textrm{$t$-reduced}),
	\]
	where $z_0, \dots, z_{\ell}$ are reduced words in $\{b, c, s\}$, $\delta_1, \dots, \delta_{\ell}=\pm 1$, and where whenever $z_i$ is not the identity element in $\mathbf{H}_1$,
	$z_i \notin \langle s \rangle$ provided either $\delta_i=-1$ or $\delta_{i+1}=1$.
	
	\medskip
	\noindent
	{\bf Claim.} {\it Even after making all $t$-reductions in the right-handed expression
		\[
		z(b, c, s, \varphi(t))=z_0 \varphi(t)^{\delta_1} z_1 \cdots  z_{\ell-1} \varphi(t)^{\delta_{\ell}} z_{\ell},
		\]
		at least one $t^{\pm 1}$ in $\varphi(t)^{\delta_i}$ remains unreduced for every $i=1, \dots, \ell$.}
	
	\begin{proof}[Proof of Claim]
		Let us consider all possible $t$-reductions in each
		$\varphi(t)^{\delta_i} z_i \varphi(t)^{\delta_{i+1}}$.
		First, if $\delta_i=-1$ and $\delta_{i+1}=1$,
		it follows from the fact $\epsilon_1=1$ that
		there is no $t$-reduction in $\varphi(t)^{-1} z_i \varphi(t)$,
		since $z_i \notin \langle s \rangle$ in this case.
		
		Next, if either both $\delta_i=-1$ and $\delta_{i+1}=-1$,
		or both $\delta_i=1$ and $\delta_{i+1}=1$,
		it follows from our assumption $z_i \notin \langle s \rangle$ in either case
		that there is no $t$-reduction in $\varphi(t)^{\mp 1} z_i \varphi(t)^{\mp 1}$.
		
		Finally, assume that $\delta_i=1$ and $\delta_{i+1}=-1$.
		In this case,
		there can be $t$-reductions in $\varphi(t) z_i \varphi(t)^{-1}$.
		Even if there are $t$-reductions in $\varphi(t) z_i \varphi(t)^{-1}$,
		not all of $t^{\pm 1}$ can be reduced.
		The reason is as follows.
		Suppose that all of $t^{\pm 1}$ in $\varphi(t) z_i \varphi(t)^{-1}$
		can be reduced.
		Then since the initial letter of $\varphi(t)$ is $t^{\epsilon_1}$ with $\epsilon_1=1$,
		the last $t$-reduction
		in $\varphi(t) z_i \varphi(t)^{-1}$ has the form $t z_i' t^{-1}$,
		where $z_i' \in \langle s^3 \rangle$,
		and so $\varphi(t) z_i \varphi(t)^{-1}$, after making all $t$-reductions,
		becomes a power of $s$.
		But then from the equality $\varphi(t)^{-1} s \varphi(t)=s^3$ in $\mathbf{H}_2$,
		it follows that $z_i \in \langle s^3 \rangle$.
		This is a contradiction to the assumption that
		$z(b,c,s,t)$ is $t$-reduced.
		
		Therefore, only in the case where $\delta_i=1$ and $\delta_{i+1}=-1$,
		$t$-reductions can happen in $\varphi(t)^{\delta_i} z_i \varphi(t)^{\delta_{i+1}}$.
		But even in this case, not all of $t^{\pm 1}$ in $\varphi(t)^{\delta_i} z_i \varphi(t)^{\delta_{i+1}}$
		can be reduced.
		Therefore, the assertion of Claim follows.
	\end{proof}
	
	In view of Claim, in order for equality (\ref{equ:equ_t}) to hold,
	we see that the only possibility is that
	$\ell=1$, so that $z(b, c, s, \varphi(t))=z_0\varphi(t)^{\delta_1}z_1$.
	Combining this with~(\ref{equ:equ_t}), we get
	\begin{equation}
		\label{equ:tt}
		t \stackrel{\mathbf{H}_2}{=} z_0\varphi(t)^{\delta_1}z_1.
	\end{equation}
	Since $\varphi(t)$ is written as a $t$-reduced form, the right-handed expression of~(\ref{equ:tt})
	is already $t$-reduced.
	Then by Britton's Lemma, for the equality in (\ref{equ:tt}) to hold,
	only one alphabet $t$ occurs
	and at the same time no alphabet $t^{-1}$ occurs
	in the right-handed expression.
	Here, since $\varphi(t)=t^{\epsilon_1} w_1 \cdots w_{h-1} t^{\epsilon_h}$ with $\epsilon_1=1$, we see that this happens only when $\delta_1=1$ and $\varphi(t)=t$, completing the proof of Proposition~\ref{prop:phi_t_3}.
\end{proof}

In conclusion, we obtain the following

\begin{corollary}
	\label{cor:hopfian}
	The group $\mathbf{H}_2$ is Hopfian.
\end{corollary}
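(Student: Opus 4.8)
The plan is to assemble Propositions~\ref{prop:phi_b_c}--\ref{prop:phi_t_3} into the single assertion that every surjective endomorphism of $\mathbf{H}_2$ is in fact an automorphism. The central point I would emphasize is bookkeeping: throughout the chain of those propositions, the fixed surjective endomorphism $\varphi$ was never replaced by an arbitrary endomorphism, but only by a composition of $\varphi$ with an \emph{automorphism} of $\mathbf{H}_2$ (namely an inner automorphism, or one of the explicit invertible maps $\tilde{\rho}$, $\tilde{\tau}$ whose two-sided inverses are exhibited in those proofs). Since pre- or post-composing with an automorphism $\alpha$ yields an automorphism precisely when the original map is one, the property of being an automorphism is invariant under all of these replacements.

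First I would recall that an arbitrary surjective endomorphism $\varphi$ of $\mathbf{H}_2$ may, after the successive reductions of Propositions~\ref{prop:phi_b_c}--\ref{prop:phi_t_3}, be assumed to satisfy $\varphi(b)=b$, $\varphi(c)=c$, $\varphi(s)=s$ and $\varphi(t)=t$. Because $\{b,c,s,t\}$ generates $\mathbf{H}_2$, this reduced map agrees with the identity on a generating set and hence coincides with $\mathrm{id}_{\mathbf{H}_2}$.

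Then I would invoke the invariance noted above: the reduced map equals $\mathrm{id}_{\mathbf{H}_2}$, which is an automorphism, and it was obtained from the original $\varphi$ by finitely many compositions with automorphisms of $\mathbf{H}_2$; therefore the original $\varphi$ is itself an automorphism, and in particular injective. Since $\varphi$ was an arbitrary surjective endomorphism of $\mathbf{H}_2$, it follows that every epimorphism $\mathbf{H}_2 \rightarrow \mathbf{H}_2$ is an automorphism, which is exactly the Hopf property for $\mathbf{H}_2$.

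I do not anticipate any genuine obstacle, since the substantive work is entirely contained in Propositions~\ref{prop:phi_b_c}--\ref{prop:phi_t_3}. The only point requiring care is the verification that each modification made to $\varphi$ in those proofs was indeed composition with an automorphism rather than merely with an endomorphism; I would check each proposition against this criterion to confirm that the reduction argument preserves the automorphism property and hence legitimately transfers back to the original $\varphi$.
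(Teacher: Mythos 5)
Your proposal is correct and follows the paper's proof essentially verbatim: the paper likewise observes that Propositions~\ref{prop:phi_b_c}--\ref{prop:phi_t_3} reduce $\varphi$, by finitely many compositions with automorphisms (inner automorphisms and the explicit invertible maps $\tilde{\rho}$, $\tilde{\tau}$), to the identity on the generating set $\{b,c,s,t\}$, whence $\varphi$ itself is an automorphism. Your added emphasis on verifying that each replacement is composition with a genuine automorphism (with two-sided inverse exhibited) rather than a mere endomorphism is exactly the right point of care, and it is indeed satisfied in each of the cited propositions.
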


\begin{proof}
	Let $\varphi$ be a surjective endomorphism of $\mathbf{H}_2$.
	Propositions~\ref{prop:phi_b_c}--\ref{prop:phi_t_3} show that
	the composition of $\varphi$ with appropriate automorphisms of $\mathbf{H}_2$
	becomes the identity function of $\mathbf{H}_2$,
	so that $\varphi$ is indeed an automorphism of $\mathbf{H}_2$.
	This means that $\mathbf{H}_2$ is Hopfian.
\end{proof}

\end{document}